\newtheorem{theorem}{Theorem}[section]
\newtheorem{proposition}[theorem]{Proposition}
\newtheorem{lemma}[theorem]{Lemma}
\theoremstyle{definition}
\newtheorem{definition}[theorem]{Definition}
\newtheorem{example}[theorem]{Example}
\theoremstyle{remark}
\numberwithin{equation}{section}
\def\sqr#1#2{{\,\vcenter{\vbox{\hrule height.#2pt\hbox{\vrule width.#2pt
height#1pt \kern#1pt\vrule width.#2pt}\hrule height.#2pt}}\,}}
\newcommand{\conv}{\operatorname{conv}}
\begin{document}
\title{Quantifying properties ($K$) and ($\mu^{s}$)}

\author{Dongyang Chen}
\address{School of Mathematical Sciences\\ Xiamen University,
Xiamen, 361005, China}
\email{cdy@xmu.edu.cn}

\author{Tomasz Kania}
\address{Mathematical Institute\\Czech Academy of Sciences\\\v Zitn\'a 25 \\115 67 Praha 1, Czech Republic\\
and\\  Institute of Mathematics and Computer Science\\ Jagiellonian University\\
{\L}ojasiewicza 6\\ 30-348 Krak\'{o}w, Poland}
\email{kania@math.cas.cz, tomasz.marcin.kania@gmail.com}

\author{Yingbin Ruan}
\address{College of Mathematics and informatics\\ Fujian Normal University,
Fuzhou, 350007, China}
\email{yingbinruan@sohu.com}

\thanks{{\em 2010 Mathematics Subject Classification:} 46B26; 46B50.\\
{\em Keywords:} Property ($K$); Grothendieck property; Reflexivity; Banach--Saks property; Property ($\mu^{s}$).\\
The first-named author was supported by the National Natural Science Foundation of China (Grant No.11971403) and the Natural Science Foundation of Fujian Province of China (Grant No. 2019J01024). The second-named author acknowledges with thanks funding received from SONATA 15 No.~2019/35/D/ST1/01734.
}

\begin{abstract}
A Banach space $X$ has \textit{property $(K)$}, whenever every weak* null sequence in the dual space admits a convex block subsequence $(f_{n})_{n=1}^\infty$ so that $\langle f_{n},x_{n}\rangle\to 0$ as $n\to \infty$ for every weakly null sequence $(x_{n})_{n=1}^\infty$ in $X$; $X$ has \textit{property $(\mu^{s})$} if every weak$^{*}$ null sequence in $X^{*}$ admits a subsequence so that all of its subsequences are Ces\`{a}ro convergent to $0$ with respect to the Mackey topology. Both property $(\mu^{s})$  and reflexivity (or even the Grothendieck property) imply property $(K)$. In the present paper we propose natural ways for quantifying the aforementioned properties in the spirit of recent results concerning other familiar properties of Banach spaces.
\end{abstract}

\date{Version: \today}
\maketitle

\baselineskip=18pt 	
\section{Introduction}
The present paper is inspired by many recent results that quantify various familiar properties of Banach spaces such as weak sequential completeness \cite{KPS}, reciprocal Dunford--Pettis property \cite{KS}, Schur property \cite{KalendaSupurny:2012}, Dunford--Pettis property \cite{KKS}, Banach--Saks property \cite{BKS}, property $(V)$ \cite{Krulisova:2017}, Grothendieck property \cite{Bendova:2014}, etc. We continue this line of research and investigate possible quantifications of related properties $(K)$ and ($\mu^{s}$) introduced by Kwapie\'n and Rodr\'{\i}guez, respectively.

Mazur's lemma (see, \emph{e.g.}, \cite[p.~11]{D}) states that every weakly convergent sequence in a Banach space has a convex block subsequence that is norm convergent to the same limit. (A sequence $(y_{n})_{n=1}^\infty$ in a Banach space $X$ is a \textit{convex block subsequence} of a~sequence $(x_{n})_{n=1}^\infty$ provided that there exists a strictly increasing sequence of positive integers $(k_{n})_{n=1}^\infty$ so that $y_{n}\in \conv(x_{i})_{i=k_{n-1}+1}^{k_{n}}$ for every $n\in \mathbb N$, where we set $k_{0}=0$; we denote by $\operatorname{cbs}((x_{n})_{n=1}^\infty )$ the collection of all convex block subsequences of $(x_{n})_{n=1}^\infty$.) Kalton and Pe{\l}czy\'{n}ski \cite[Proposition 2.2]{KP} proved that if a Banach space $X$ contains an isomorphic copy of $c_{0}$, then for every $\sigma$-finite measure $\mu$ the kernel of any surjection $Q$ from $L_1(\mu)$ onto $X$ is uncomplemented in its second dual. Consequently, $\ker Q$ is not isomorphic to a Banach lattice; the original argument relied on the  Lindenstrauss Lifting Principle. Having read a preliminary version of \cite{KP}, Kwapie\'{n} introduced property $(K)$ to provide an alternative proof of \cite[Propositon 2.2]{KP} which did not appeal to the Lindenstrauss Lifting Principle (Kwapie\'{n}'s idea was incorporated in \cite{KP}, where it was presented with his permission). Property $(K)$ is central to our considerations:
\begin{definition}\label{def:property K}
A Banach space $X$ has \textit{property $(K)$}, whenever every weak* null sequence in $X^{*}$ admits a convex block subsequence $(f_{n})_{n=1}^\infty$ so that $\lim\limits_{n\rightarrow \infty}\langle f_{n},x_{n}\rangle=0$ for every weakly null sequence $(x_{n})_{n=1}^\infty$ in $X$.
\end{definition}

Put equivalently, Definition~\ref{def:property K} stipulates that the sequence $(f_{n})_{n=1}^\infty$ converges to $0$ with respect to the Mackey topology $\mu(X^{*},X)$, which is the (locally convex) topology of uniform convergence in $X^*$ on weakly compact subsets of $X$ (see \cite[Lemma 3.5]{DDS}).\smallskip

Property $(K)$ may be thought of as a counterpart of Mazur's lemma with respect to the weak* topology. It was shown in \cite{KP} that the space $L_1(\mu)$ for a $\sigma$-finite measure has property ($K$), yet $c_{0}$ fails to have this property. \textit{Schur spaces}, \emph{i.e.}, spaces in which weak convergence of sequences coincides with norm convergence have property $(K)$ for trivial reasons. It follows from Mazur's lemma that Grothendieck spaces, in particular, reflexive spaces, have property $(K)$. (A Banach space $X$ is a~\textit{Gro\-then\-dieck space}, whenever every weak* convergent sequence in $X^{*}$ converges weakly.)

Figiel, Johnson, and Pe{\l}czy\'{n}ski \cite{FJP} refined property $(K)$ by introducing a weaker property that they call \textit{property $(k)$}; this property appeared implicitly also in \cite{Jo}. Property ($k$) was used in \cite{FJP} to show that the Separable Complementation Property need not pass to subspaces.

It was proved in \cite {FJP} that property $(k)$ is enjoyed by every separable subspace of a weakly sequentially complete Banach lattice, weakly sequentially complete Banach lattices with weak units, and every separable subspace of the predual of a von Neumann algebra. Oja \cite{FJP} pointed out that the Radon--Nikod\'{y}m property implies property $(k)$. However, it was shown \cite{FJP} that the $\ell_1$-sum of continuum many copies of $L_1[0,1]$ as well as Banach spaces containing complemented subspaces isomorphic to $c_{0}$ fail property $(k)$.

 Property $(K)$ admits a number of characterisations. More precisely, let $X$ be a~Banach space. Then the following assertions are equivalent:
\begin{enumerate}[label=(\alph*)]
\item $X$ has property $(K)$.
\item \label{K:b} Every weak$^{*}$ null sequence in $X^{*}$ admits a convex block subsequence $(f_{n})_{n=1}^\infty$ so that $\lim\limits_{n\rightarrow \infty}\langle f_{n},x_{n}\rangle=0$ for every weakly null sequence $(x_{n})_{n=1}^\infty $ in $X$.
\item\label{K:c} Every weak$^{*}$ null sequence in $X^{*}$ admits a convex block subsequence that is $\mu(X^{*},X)$-null.
\item\label{K:d} Every weak$^{*}$ convergent sequence in $X^{*}$ admits a convex block subsequence that is $\mu(X^{*},X)$-Cauchy.
\end{enumerate}

In Section~\ref{sect:quant} of the present paper, we prove quantitative versions of the aforestated characterisations. In order to do so, we introduce a~quantity $\alpha$ that characterises $\mu(X^{*},X)$-null sequences and subsequently we introduce a quantity $K_{1}$ that characterises property $(K)$. This is a quantitative version of clause \ref{K:c}. In order to quantify \ref{K:b}, we introduce a quantity $\beta$ that turns out to be equivalent to $\alpha$ for weak$^{*}$ null sequences. By using the quantity $\beta$, we introduce a further quantity $K_{2}$ that we then prove is then equivalent to the quantity $K_{1}$. Finally, to quantify \ref{K:d}, we introduce a quantity $K_{3}$ in terms of $\operatorname{ca}_{\rho^{*}}$ defined in \cite{KKS} that measures $\mu(X^{*},X)$-Cauchyness and prove that $K_{3}$ is equivalent to $K_{1}$. In summary, we quantify property $(K)$ by means of the following estimates:
$$K_{2}(X)\leqslant K_{1}(X)\leqslant 2K_{2}(X)$$ and $$K_{1}(X)\leqslant K_{3}(X)\leqslant 4K_{1}(X).$$

Furthermore, we investigate the values of the quantity $K_{2}$ in certain familiar Banach spaces failing property $(K)$ and obtain that, in particular, $K_{2}(c_{0})=1$ and the $K_{2}$-value of the $\ell_1$-sum of $\mathfrak{c}$ copies of $L_1[0,1]$ is equal to $1$. (Curiously, Frankiewicz and Plebanek \cite{FP} proved that under Martin's Axiom, the $\ell_1$-sum of fewer than $\mathfrak{c}$ copies of $L_1[0,1]$ still has property $(K)$.)\smallskip

The purpose of Section~\ref{sect:groth} is to quantify the following widely known implications:
\begin{equation}
\text{$X$ is reflexive $\Rightarrow$ $X$ is a Grothendieck space $\Rightarrow$ $X$ has property $(K)$.}
\tag{$\star$}
\end{equation}

In order to quantify ($\star$), we first make a slight improvement on characterisations of weak compactness due to \"{U}lger \cite{U} (see also \cite{DRS}). Using this improvement, we establish a characterisation of the Grothendieck property, which is used to introduce a quantity $G$ measuring the Grothendieck property. This quantification of the Grothendieck property is different from the quantitative Grothendieck property proposed by Kruli\v{s}ov\'a (ne\'e Bendov\'a) in \cite{Bendova:2014}.
Again using the improvement, we introduce a new quantity $R$ measuring reflexivity for Banach spaces. Meanwhile, the relationship between the quantity $R$ and several classical equivalent quantities measuring weak non-compactness is discussed. We also investigate possible values of the quantity $R$ of some classical Banach spaces. Having introduced $G$ and $R$, we quantify the implications ($\star$) as follows: $$K_{1}(X)\leqslant G(X)\leqslant R(X^{*}).$$
Avil\'{e}s and Rodr\'{\i}guez \cite{AR} studied the implications ($\star$) for Banach spaces not containing isomorphic copies of $\ell_1$ and proved that for such space $X$:
\begin{equation}
\text{$X$ is reflexive $\Leftrightarrow$ $X$ is a Grothendieck space $\Leftrightarrow$ $X$ has property $(K)$.}
\tag{$\star\star$}
\end{equation}
Finally, we quantify ($\star\star$) as follows: $$K_{1}(X)\leqslant G(X)\leqslant R(X^{*})\leqslant 8K_{2}(X).$$\medskip

A bounded subset $A$ of a Banach space $X$ is a \textit{Banach--Saks set} if each sequence in $A$ has a Ces\`{a}ro convergent subsequence. A Banach space $X$ is said to have the \textit{Banach--Saks property} if its closed unit ball $B_{X}$ is a Banach--Saks set. Banach and Saks proved in \cite{BS} that the spaces $L_p[0,1]$ and $\ell_p$ ($1<p<\infty$) enjoy the Banach--Saks property, hence the name. Kakutani \cite{Ka} later showed that uniformly convex spaces have the Banach--Saks property (hence so do superreflexive spaces). Any space with the Banach--Saks property is reflexive \cite{NW}, but there are reflexive spaces without the Banach--Saks property \cite{Ba}. A localised version of the result of \cite{NW} says that any Banach--Saks set is relatively weakly compact \cite{LART}.\smallskip

It follows from the Erd\H{o}s--Magidor Theorem (Theorem~\ref{1.1}) that a Banach space $X$ has the Banach--Saks property if and only if every bounded sequence in $X$ admits a~subsequence such that all of its subsequences are Ces\`{a}ro convergent.
Property $(\mu^{s})$, introduced by Rodr\'{\i}guez \cite{Ro}, is a statement refining the Banach--Saks property for weak* null sequences in the dual space.

\begin{definition}A Banach space $X$  has \textit{property $(\mu^{s})$}, whenever every weak$^{*}$ null sequence in $X^{*}$ admits a subsequence so that all of its subsequences are Ces\`{a}ro convergent to $0$ with respect to $\mu(X^{*},X)$.\end{definition}
Clearly, if $X^{*}$ has the Banach--Saks property, then $X$ has property $(\mu^{s})$. The converse is true for reflexive spaces \cite[Proposition 2.2]{Ro}. Moreover, it was pointed out \cite[Lemma 2.1, Remark 2.3]{Ro} that property $(\mu^{s})$ is strictly stronger than property ($K$).

The goal of Section~\ref{sect:mus} is to quantify the following implications (\cite{Ro}):
\begin{equation}
\text{$X^{*}$ has the Banach--Saks property $\Rightarrow$ $X$ has property $(\mu^{s})$ $\Rightarrow$ $X$ has property $(K)$.}
\tag{$\star\star\star$}
\end{equation}

To quantify property $(\mu^{s})$, we first introduce a quantity $c\alpha$ by means of $\alpha$ that measures the rate of Ces\`{a}ro convergence to $0$ with respect to $\mu(X^{*},X)$. By using the quantity $c\alpha$, we introduce a quantity $\mu^{s}$ that we then prove characterises property $(\mu^{s})$. Furthermore, we introduce a quantity $\operatorname{bs}(X)$ that characterises the Banach--Saks property of a Banach space $X$. This quantity is stronger than the quantity introduced in \cite{BKS} that measures how far a bounded set is from being Banach--Saks. By using the quantities $\mu^{s}$ and $\operatorname{bs}$, we quantify ($\star\star\star$) as follows:
$$\frac{1}{3}K_{1}(X)\leqslant \mu^{s}(X)\leqslant \operatorname{bs}(X^{*}).$$
Finally, we prove that, for a reflexive space $X$, $$\mu^{s}(X)\leqslant \operatorname{bs}(X^{*})\leqslant 4\mu^{s}(X),$$ which is a quantitative version of \cite[Proposition 2.2]{Ro}.

\section{Preliminaries} We use standard notation and terminology in-line with \cite{AK} and \cite{LT}. Throughout this paper, all Banach spaces are infinite-dimensional over the fixed field of real or complex numbers. By a \emph{subspace} we mean a closed, linear subspace. An \emph{operator} will always mean a bounded linear operator. If $X$ is a Banach space, we denote by $B_{X}$ the closed unit ball $\{x\in X\colon \|x\|\leqslant 1\}$ and by $\mathcal{F}_{X}$ the family of all weakly compact subsets in $B_{X}$. For a subset $A$ of $X$, $\conv(A)$ stands for the convex hull of $A$. For brevity of notation, we denote by $\textrm{ss}((x_{n})_{n=1}^\infty )$ the family of all subsequences of a sequence $(x_{n})_{n=1}^\infty$.

\subsection{Weak compactness} Let us invoke the following characterisation of weak compactness due to \"{U}lger \cite{U}.

\begin{lemma}\label{3.1}
A bounded subset $A$ of a Banach space $X$ is relatively weakly compact if and only if given any sequence $(x_{n})_{n=1}^\infty $ in $A$, there exists a sequence $(z_{n})_{n=1}^\infty $ with $z_{n}\in \conv(x_{i}:i\geqslant n)$ that converges weakly.
\end{lemma}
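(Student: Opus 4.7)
The plan is to prove the two directions separately: the forward direction via the Eberlein--\v{S}mulian theorem, and the converse via Grothendieck's double limit criterion for relative weak compactness. For the forward direction, given $A$ relatively weakly compact and a sequence $(x_n)_{n=1}^\infty\subseteq A$, Eberlein--\v{S}mulian yields a weakly convergent subsequence $(x_{n_k})_{k=1}^\infty$; setting $k(n):=\min\{k\colon n_k\geqslant n\}$ and $z_n:=x_{n_{k(n)}}$ produces a subsequence of $(x_{n_k})$, hence weakly convergent, with $z_n\in\{x_i\colon i\geqslant n\}\subseteq\conv(x_i\colon i\geqslant n)$.

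For the converse I would invoke Grothendieck's criterion: a bounded set $A\subseteq X$ is relatively weakly compact if and only if for every $(x_n)_{n=1}^\infty\subseteq A$ and every $(f_m)_{m=1}^\infty\subseteq B_{X^*}$, the iterated limits $a:=\lim_m\lim_n\langle f_m,x_n\rangle$ and $b:=\lim_n\lim_m\langle f_m,x_n\rangle$ coincide whenever both exist. Fix such data, set $\alpha_m:=\lim_n\langle f_m,x_n\rangle$ and $\beta_n:=\lim_m\langle f_m,x_n\rangle$, and apply the hypothesis to obtain a convex block sequence $z_n=\sum_i\lambda_i^{(n)}x_i\in\conv(x_i\colon i\geqslant n)$ (finite sum) weakly convergent to some $z\in X$.

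The heart of the argument is to transfer the iterated limits from $\langle f_m,x_n\rangle$ to the auxiliary double sequence $d_{m,n}:=\langle f_m,z_n\rangle$. For fixed $m$, $d_{m,n}$ is a convex combination of tail values $\langle f_m,x_i\rangle$ with $i\geqslant n$, so $\lim_n d_{m,n}=\alpha_m$; comparing with the weak convergence identity $\lim_n d_{m,n}=\langle f_m,z\rangle$ yields $\alpha_m=\langle f_m,z\rangle$ and hence $\lim_m\lim_n d_{m,n}=a$. For fixed $n$, $d_{m,n}$ is a finite linear combination of $\langle f_m,x_i\rangle$, so $\lim_m d_{m,n}=\sum_i\lambda_i^{(n)}\beta_i$, which is a convex combination of the tail $\{\beta_i\colon i\geqslant n\}$, and hence $\lim_n\lim_m d_{m,n}=b$. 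Since $(z_n)$ is weakly convergent, the set $\{z_n\colon n\in\mathbb{N}\}\cup\{z\}$ is weakly compact (again by Eberlein--\v{S}mulian), so the easy direction of Grothendieck's criterion applied there forces the two iterated limits of $d_{m,n}$ to coincide: $a=b$. This verifies Grothendieck's criterion for $A$ and completes the proof. The main obstacle is the bookkeeping of the double-indexed limits---keeping track of how convex combinations of tails behave under both iterated limits---while all the analytic ingredients (Eberlein--\v{S}mulian and Grothendieck's double limit criterion) are classical.
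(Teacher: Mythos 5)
Your argument is correct, and it takes a genuinely different route from the paper. The paper does not prove Lemma~\ref{3.1} at all: it quotes it as \"{U}lger's theorem, and the only related argument it gives is the proof of the refinement Lemma~\ref{3.3}, whose nontrivial implication is obtained from James' characterisation of weak compactness via norm-attaining functionals (pass to $K=\overline{\conv}(A)$, pick $(x_n)$ almost attaining $\sup_K f$, and use the weakly convergent far-out convex combinations to produce a point of $K$ where $f$ attains its supremum). You instead verify Grothendieck's double limit criterion directly: given $(x_n)\subseteq A$ and $(f_m)\subseteq B_{X^*}$ with both iterated limits existing, you transfer them to $d_{m,n}=\langle f_m,z_n\rangle$ using that each $z_n$ is a \emph{finite} convex combination of tail terms $x_i$, $i\geqslant n$ (so $\lim_n d_{m,n}=\alpha_m=\langle f_m,z\rangle$ and $\lim_m d_{m,n}$ is a convex combination of the $\beta_i$, $i\geqslant n$), and then conclude $a=b$ from weak convergence of $(z_n)$; this bookkeeping is sound. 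What each approach buys: the James route is shorter once James' deep theorem is granted and applies verbatim to the convex-block version; your route replaces James by the double limit criterion, which is exactly the quantity $\upgamma_X$ used elsewhere in the paper (the inequality $\mathrm{wk}_X(A)\leqslant\upgamma_X(A)$ quoted from [AC] is the hard direction of your criterion in quantitative form), so your proof is closer in spirit to the paper's quantitative machinery. Two cosmetic points: in the forward direction your $(z_n)=x_{n_{k(n)}}$ is not literally a subsequence (indices repeat), though it clearly still converges weakly since $k(n)\to\infty$ monotonically; and weak compactness of $\{z_n\colon n\in\mathbb N\}\cup\{z\}$ needs no Eberlein--\v{S}mulian --- a weakly convergent sequence together with its limit is weakly compact for elementary reasons, or more directly one may finish by evaluating a weak$^*$ cluster point $g\in B_{X^*}$ of $(f_m)$ at $z_n$ and $z$, which gives $b=\lim_n\langle g,z_n\rangle=\langle g,z\rangle=a$ without invoking the easy direction of the criterion at all.
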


Diestel, Ruess, and Schachermayer \cite{DRS} improved Lemma \ref{3.1} as follows.

\begin{lemma}\label{3.2}
For a bounded subset $A$ of $X$ the following statements are equivalent:
\begin{enumerate}
\item $A$ is relatively weakly compact.
\item For every sequence $(x_{n})_{n=1}^\infty $ in $A$, there is a norm-convergent sequence $(z_{n})_{n=1}^\infty $ such that $z_{n}\in \conv(x_{i}\colon i\geqslant n)$.
\item For every sequence $(x_{n})_{n=1}^\infty $ in $A$, there is a weakly convergent sequence $(z_{n})_{n=1}^\infty $ such that $z_{n}\in \conv(x_{i}\colon i\geqslant n)$.
\end{enumerate}
\end{lemma}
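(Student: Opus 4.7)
The implication (2) $\Rightarrow$ (3) is immediate since norm convergence implies weak convergence, and (3) $\Rightarrow$ (1) is precisely Lemma~\ref{3.1}. So the substantive task is to establish (1) $\Rightarrow$ (2), and my plan is to combine the Eberlein--\v{S}mulian theorem with Mazur's lemma, adjoined by a small reindexing to obtain the tail-indexed form required in (2).

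Given a relatively weakly compact set $A$ and an arbitrary sequence $(x_n)_{n=1}^\infty$ in $A$, by Eberlein--\v{S}mulian I extract a weakly convergent subsequence $(x_{n_k})_{k=1}^\infty$, say $x_{n_k} \to x$ weakly. Applying Mazur's lemma to the weakly convergent tails $(x_{n_k})_{k\geqslant m}$ for each $m \in \mathbb{N}$, I obtain a sequence $y_m \in \conv(x_{n_k} \colon k\geqslant m)$ with $\|y_m - x\| \leqslant 1/m$, whence $y_m \to x$ in norm.

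The remaining subtlety is that clause (2) requires $z_n \in \conv(x_i \colon i\geqslant n)$ for \emph{every} index $n$, not only for a cofinal set. To accommodate this, for each $n \in \mathbb{N}$ I let $m(n)$ be the smallest integer with $n_{m(n)} \geqslant n$; since $n_k \to \infty$ this is well-defined and $m(n) \to \infty$ as $n \to \infty$. Setting $z_n := y_{m(n)}$, one has
$$z_n \in \conv(x_{n_k} \colon k\geqslant m(n)) \subseteq \conv(x_i \colon i\geqslant n_{m(n)}) \subseteq \conv(x_i \colon i\geqslant n),$$
and $z_n = y_{m(n)} \to x$ in norm, delivering (2).

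I do not expect any genuine obstacle here: the only point requiring mild attention is the bookkeeping that converts \emph{``$y_m \in \conv(x_{n_k} \colon k\geqslant m)$ with $m\to\infty$''} into the tail-indexed form \emph{``$z_n \in \conv(x_i \colon i\geqslant n)$ for every $n$''}, and the overall argument is a short blend of Eberlein--\v{S}mulian and Mazur's lemma, with Lemma~\ref{3.1} handling the converse direction.
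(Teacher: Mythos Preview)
Your proof is correct. Note, however, that the paper does not actually supply a proof of Lemma~\ref{3.2}: it is quoted as a known result of Diestel, Ruess, and Schachermayer \cite{DRS}, stated without argument. The paper does prove the closely related Lemma~\ref{3.3} (the convex-block-subsequence variant), and there the implication $(1)\Rightarrow(2)$ is handled exactly by your recipe---pass to a weakly convergent subsequence via Eberlein--\v{S}mulian, then apply Mazur's lemma---minus the reindexing step, which is unnecessary in that setting because a convex block subsequence of a subsequence is already a convex block subsequence of the original sequence. Your additional bookkeeping with $m(n)$ is precisely what is needed to upgrade that argument to the tail-indexed form $z_n\in\conv(x_i\colon i\geqslant n)$ required in Lemma~\ref{3.2}, and it is carried out cleanly.
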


Let $A$ and $B$ are non-empty subsets of a Banach space $X$, we set
\begin{itemize}
    \item $\textrm{d}(A,B)=\inf\{\|a-b\|\colon a\in A,b\in B\},$
    \item $\widehat{\textrm{d}}(A,B)=\sup\{\textrm{d}(a,B)\colon a\in A\}.$
\end{itemize}
$\textrm{d}(A,B)$ is the ordinary distance between $A$ and $B$, and $\widehat{\textrm{d}}(A,B)$ is the (non-symmetrised) Hausdorff distance from $A$ to $B$. When $A$ is a bounded subset of a Banach space $X$, following \cite{KKS}, we set
\begin{itemize}
\item $\textrm{wk}_{X}(A)=\widehat{\textrm{d}}\big(\overline{A}^{\sigma(X^{**},X^{*})},X\big);$
\item $\operatorname{wck}_{X}(A)=\sup\{\textrm{d}(\textrm{clust}_{X^{**}}((x_{n})_{n=1}^\infty ),X)\colon (x_{n})_{n=1}^\infty $ is a sequence in $A\}$,\\ where $\textrm{clust}_{X^{**}}((x_{n})_{n=1}^\infty )$ is the set of all weak$^{*}$-cluster points of $(x_{n})_{n=1}^\infty $ in $X^{**}$.
\item $\upgamma_{X}(A)=\sup\{|\lim\limits_{n}\lim\limits_{m}\langle f_{m},x_{n}\rangle-\lim\limits_{m}\lim\limits_{n}\langle f_{m},x_{n}\rangle|\colon (x_{n})_{n=1}^\infty $ is a sequence in $A$, $(f_{m})_{m=1}^\infty$ is a sequence in $B_{X^{*}}$ and all the involved limits exist$\}$.
\end{itemize}
It follows from \cite[Theorem 2.3]{AC} that
$$\operatorname{wck}_{X}(A)\leqslant \textrm{wk}_{X}(A)\leqslant \upgamma_{X}(A)\leqslant 2\operatorname{wck}_{X}(A).$$
\subsection{Mackey topology}
 Let $X$ be a Banach space. The Mackey topology, $\mu(X^{*},X)$, is the strongest locally convex topology on $X^{*}$ which is compatible with the dual pairing $\langle X^{*},X\rangle$. In particular, $\overline{C}^{w^{*}}=\overline{C}^{\mu(X^{*},X)}$ for every convex subset $C$ of $X^{*}$. If the dual unit ball endowed with the relative Mackey topology, $(B_{X^{*}},\mu(X^{*},X))$, is metrisable, then $X$ has property $(K)$. Schl\"{u}chtermann and Wheeler \cite{SW} term Banach spaces $X$ for which $(B_{X^{*}},\mu(X^{*},X))$ is metrisable as \textit{strongly weakly compactly generated} (SWCG) spaces. As proved in \cite{SW}, a Banach space $X$ is SWCG if and only if there exists a~weakly compact subset $K$ of $X$ so that for every weakly compact subset $L$ of $X$ and $\varepsilon>0$, there is a positive integer $n$ with $L\subseteq nK+\varepsilon B_{X}$. Moreover, reflexive spaces, separable Schur spaces, the space of operators of trace-class on a separable Hilbert space, and $L_1(\mu)$ for a $\sigma$-finite measure $\mu$ are SWCG.

\subsection{Banach--Saks sets} Let $(x_{n})_{n=1}^\infty $ be a bounded sequence in a Banach space. We set $$\operatorname{ca}((x_{n})_{n=1}^\infty )=\inf_{n\in\mathbb N}\sup_{k,l\geqslant n}\|x_{k}-x_{l}\|.$$
Clearly $(x_{n})_{n=1}^\infty $ is norm-Cauchy if and only if $\operatorname{ca}((x_{n})_{n=1}^\infty )=0$.
Following \cite{BKS}, we define $$\textrm{cca}\big((x_{n})_{n=1}^\infty \big)=\operatorname{ca}\big((\frac{1}{n}\sum_{i=1}^{n}x_{i})_{n=1}^\infty\big).$$
Clearly, $\textrm{cca}((x_{n})_{n=1}^\infty )=0$ if and only if $(x_{n})_{n=1}^\infty $ is Ces\`{a}ro convergent.

A subset $A$ of a Banach space is \emph{Banach--Saks}, whenever every sequence in $A$ has a~Ces\`aro convergent subsequence.

We shall require the well-known $0$-$1$ law by Erd\H{o}s--Magidor \cite{EM}.

\begin{theorem}(Erd\H{o}s--Magidor)\label{1.1}
Every bounded sequence in a Banach space has a subsequence such that either all its further subsequences are Ces\`{a}ro convergent, or none of them.
\end{theorem}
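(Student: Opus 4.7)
The plan is to reduce the assertion to the Galvin--Prikry Ramsey theorem, which guarantees that for every Borel subset $\mathcal{C}$ of the space $[\mathbb{N}]^\omega$ of infinite subsets of $\mathbb{N}$---topologised as a subspace of $\{0,1\}^{\mathbb{N}}$---there is an infinite $M\subseteq \mathbb{N}$ such that $[M]^\omega$ is either entirely contained in or entirely disjoint from $\mathcal{C}$.

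Fix a bounded sequence $(x_{n})_{n=1}^\infty$ in a Banach space $X$. For each $A=\{a_{1}<a_{2}<\cdots\}\in [\mathbb{N}]^\omega$ and $n\in \mathbb{N}$, write $\sigma_{n}(A)=\frac{1}{n}\sum_{i=1}^{n}x_{a_{i}}$, and put
$$\mathcal{C}=\{A\in [\mathbb{N}]^\omega\colon (\sigma_{n}(A))_{n=1}^\infty\text{ is norm-Cauchy}\}.$$
The key preliminary observation is that, for each fixed $n$, the map $A\mapsto \sigma_{n}(A)$ depends only on the first $n$ elements of $A$ and is therefore continuous on $[\mathbb{N}]^\omega$. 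Hence
$$\mathcal{C}=\bigcap_{k=1}^{\infty}\bigcup_{N=1}^{\infty}\bigcap_{m,\ell\geqslant N}\Bigl\{A\in [\mathbb{N}]^\omega\colon \|\sigma_{m}(A)-\sigma_{\ell}(A)\|\leqslant 1/k\Bigr\},$$
exhibiting $\mathcal{C}$ as a $\Pi^{0}_{3}$ subset of $[\mathbb{N}]^\omega$. Applying Galvin--Prikry to $\mathcal{C}$ produces an infinite $M=\{m_{1}<m_{2}<\cdots\}$ with either $[M]^\omega\subseteq \mathcal{C}$ or $[M]^\omega\cap \mathcal{C}=\emptyset$. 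Interpreting each $B\in [M]^\omega$ as a further subsequence of $(x_{m_{n}})_{n=1}^\infty$, the first alternative says that every such subsequence is Ces\`{a}ro convergent, while the second says that none is.

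The principal obstacle is the recourse to Galvin--Prikry, a substantial piece of Ramsey theory for Borel partitions of $[\mathbb{N}]^\omega$. Since $\mathcal{C}$ sits at the low complexity level $\Pi^{0}_{3}$, however, one may sidestep the full theorem by a direct diagonal Ramsey construction along the countable list of Cauchy conditions at scales $1/k$, coupled with the Ramsey theorem for finite tuples; this is essentially the route followed in the original paper \cite{EM}. Either way, the proof turns on a dichotomy for a low-complexity Borel colouring of $[\mathbb{N}]^\omega$ encoded from the Cesàro means of $(x_{n})_{n=1}^\infty$.
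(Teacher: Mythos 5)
The paper does not prove Theorem~\ref{1.1} at all; it is imported verbatim from \cite{EM}. Your reduction is correct and is essentially the original Erd\H{o}s--Magidor argument: for each fixed $n$ the map $A\mapsto\sigma_n(A)$ is locally constant, so $\mathcal{C}$ is Borel, the Galvin--Prikry theorem yields a homogeneous infinite $M$, and elements of $[M]^{\omega}$ correspond precisely to the further subsequences of $(x_{m_n})_{n=1}^{\infty}$ with their Ces\`aro means given by the $\sigma_n$, which is exactly the asserted dichotomy (completeness of $X$ converting norm-Cauchy into norm-convergent). The only inaccurate point is your closing aside: the Ramsey property for a $\Pi^{0}_{3}$ partition is not a routine consequence of the finite Ramsey theorem plus diagonalisation, and \cite{EM} itself invokes the Galvin--Prikry partition theorem rather than such an elementary construction, so that appeal should be regarded as essential to the proof rather than removable.
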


For a bounded set $A$ in a Banach space $X$, Kruli\v{s}ov\'a (ne\'e Bendov\'{a}), Kalenda, and Spurn\'{y} \cite{BKS} introduced the following quantity
$$\operatorname{bs}(A)=\sup_{(x_{n})_{n=1}^\infty \subseteq A}\inf_{(y_{n})_{n=1}^\infty \in \textrm{ss}((x_{n})_{n=1}^\infty )}\textrm{cca}((y_{n})_{n=1}^\infty )$$
measuring how far is $A$ from being a Banach--Saks set. More precisely, they proved that $A$ is a Banach--Saks set if and only if $\operatorname{bs}(A)=0$.


\section{Quantifications of property ($K$)}\label{sect:quant}
Let $(f_{n})_{n=1}^\infty$ be a bounded sequence in $X^{*}$. Following \cite{KKS}, we set
$$\operatorname{ca}_{\rho^{*}}((f_{n})_{n=1}^\infty)=\sup_{K\in \mathcal{F}_{X}}\inf_{n\in \mathbb N}\sup_{k,l\geqslant n}\sup_{x\in K}|\langle f_{k}-f_{l},x\rangle|,$$
then $\operatorname{ca}_{\rho^{*}}((f_{n})_{n=1}^\infty)=0$ if and only if $(f_{n})_{n=1}^\infty$ is $\mu(X^{*},X)$-Cauchy (\emph{i.e.}, $\mu(X^{*},X)$-convergent as the Mackey topology $\mu(X^{*},X)$ \cite[Proposition 4 on p.~197]{Ja} is complete). We set $$\alpha((f_{n})_{n=1}^\infty)=\sup_{K\in \mathcal{F}_{X}}\limsup_{n\to \infty}\sup_{x\in K}|\langle f_{n},x\rangle|,$$
then $\alpha((f_{n})_{n=1}^\infty )=0$ if and only if $(f_{n})_{n=1}^\infty$ is $\mu(X^{*},X)$-null, and
$$\beta((f_{n})_{n=1}^\infty)=\sup_{(x_{n})_{n=1}^\infty \subseteq B_{X} \atop \textrm{weakly null}}\limsup_{n\to \infty}|\langle f_{n},x_{n}\rangle|.$$
The following result is a quantitative version of \cite[Lemma 3.3]{DDS}.

\begin{lemma}\label{2.1}
Let $(f_{n})_{n=1}^\infty$ be a weak$^{*}$ null sequence in $X^{*}$. Then
$$\beta((f_{n})_{n=1}^\infty)\leqslant \alpha((f_{n})_{n=1}^\infty)\leqslant 2\beta((f_{n})_{n=1}^\infty).$$
\end{lemma}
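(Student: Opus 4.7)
The plan is to handle the two inequalities separately. For the inequality $\beta((f_n)) \leqslant \alpha((f_n))$, given a weakly null sequence $(x_n)_{n=1}^\infty$ in $B_X$, I would consider $K_0 := \{x_n : n \in \mathbb N\}\cup\{0\}$, which is weakly compact (a weakly convergent sequence together with its limit) and contained in $B_X$, so $K_0 \in \mathcal{F}_X$. Since $x_n \in K_0$, the estimate $|\langle f_n, x_n\rangle| \leqslant \sup_{x \in K_0}|\langle f_n, x\rangle|$ holds, and passing to $\limsup_n$ yields $\limsup_n |\langle f_n, x_n\rangle| \leqslant \alpha((f_n))$. Taking the supremum over all such $(x_n)$ gives $\beta((f_n)) \leqslant \alpha((f_n))$.

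For $\alpha((f_n)) \leqslant 2\beta((f_n))$, fix $K \in \mathcal{F}_X$ and set $c := \limsup_n \sup_{x \in K}|\langle f_n, x\rangle|$. Using weak compactness of $K$ (which guarantees the supremum is attained for each $n$), I choose $x_n \in K$ with $\sup_{x \in K}|\langle f_n, x\rangle| = |\langle f_n, x_n\rangle|$, then extract a subsequence $(n_k)$ so that $|\langle f_{n_k}, x_{n_k}\rangle| \to c$. By weak compactness of $K$ again, a further subsequence (still denoted $n_k$) satisfies $x_{n_k} \rightharpoonup x$ for some $x \in K$. Writing $x_{n_k} = (x_{n_k} - x) + x$ and using the weak${}^*$ nullity of $(f_n)$ to kill the $x$-term, I obtain $c \leqslant \limsup_k |\langle f_{n_k}, x_{n_k} - x\rangle|$.

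The sequence $(x_{n_k} - x)$ is weakly null and has norm $\leqslant 2$ since $K \subseteq B_X$; hence $y_k := \tfrac{1}{2}(x_{n_k} - x)$ is a weakly null sequence in $B_X$. To apply the definition of $\beta$, which requires a sequence indexed by $\mathbb N$ paired against the full sequence $(f_n)$, I extend $(y_k)$ to a weakly null sequence $(z_n) \subseteq B_X$ by setting $z_{n_k} := y_k$ and $z_n := 0$ otherwise. Then
\[
\limsup_k |\langle f_{n_k}, y_k\rangle| \leqslant 2\limsup_n |\langle f_n, z_n\rangle| \leqslant 2\beta((f_n)),
\]
so $c \leqslant 2\beta((f_n))$, and taking the supremum over $K \in \mathcal{F}_X$ finishes the proof.

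The main subtlety is the factor $2$: it arises precisely because one cannot directly use $x_{n_k} - x \in 2B_X$ in the definition of $\beta$, and the extension of the subsequence $(y_k)$ to a full sequence $(z_n)$ by padding with zeros is needed to match the indexing used in $\beta$. The weak${}^*$ nullity hypothesis on $(f_n)$ is essential in discarding the $\langle f_{n_k}, x\rangle$ term.
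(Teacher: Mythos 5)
Your proof is correct and follows essentially the same route as the paper's: the easy inequality via the weakly compact set consisting of a weakly null sequence together with $0$, and the factor-$2$ inequality by extracting (Eberlein--\v{S}mulian) a weakly convergent subsequence in $K$, subtracting the weak limit, halving to land in $B_{X}$, padding with zeros to get a full weakly null sequence, and using weak$^{*}$ nullity to discard the $\langle f_{n_{k}},x\rangle$ term. One bookkeeping remark: the factor $2$ belongs in the identity $\limsup_{k}|\langle f_{n_{k}},x_{n_{k}}-x\rangle|=2\limsup_{k}|\langle f_{n_{k}},y_{k}\rangle|$ rather than in front of $\limsup_{n}|\langle f_{n},z_{n}\rangle|$ in your displayed chain, which read literally together with $c\leqslant 2\limsup_{k}|\langle f_{n_{k}},y_{k}\rangle|$ would only give $c\leqslant 4\beta((f_{n})_{n=1}^\infty)$; the intended estimate $c\leqslant 2\beta((f_{n})_{n=1}^\infty)$ is nevertheless clear.
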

\begin{proof}
The former inequality is trivial. It remains to prove the latter one.

Let $0<c<\alpha\big((f_{n})_{n=1}^\infty\big)$. Then there exist a weakly compact subset $K\subseteq B_{X}$, a~subsequence $(f_{k_{n}})_{n=1}^\infty$ of $(f_{n})_{n=1}^\infty$, and a sequence $(x_{n})_{n=1}^\infty $ in $K$ so that $|\langle f_{k_{n}},x_{n}\rangle|>c$ for all $n$. Since $K$ is weakly compact, by the Eberlein--\v{S}mulian theorem, $(x_{n})_{n=1}^\infty $ admits a subsequence $(x_{n_{m}})_{m=1}^\infty$ that converges weakly to some $x\in K$. We define a sequence $(z_{n})_{n=1}^\infty $ in $X$ by $$z_{k_{n_{m}}}=\frac{1}{2}(x_{n_{m}}-x)\quad (m=1,2,\ldots)$$ and $z_{n}=0$ for $n\notin \{k_{n_{m}}\}_{m=1}^{\infty}$. Then $(z_{n})_{n=1}^\infty $ is weakly null in $B_{X}$. For each $m$, we get
$$|\langle f_{k_{n_{m}}},z_{k_{n_{m}}}\rangle|=\frac{1}{2}|\langle f_{k_{n_{m}}},x_{n_{m}}\rangle-\langle f_{k_{n_{m}}},x\rangle|
\geqslant \frac{1}{2}(c-|\langle f_{k_{n_{m}}},x\rangle|).$$
Since $(f_{n})_{n=1}^\infty$ is $\sigma(X^{*},X)$-null, we get
$$\limsup_{n\to \infty}|\langle f_{n},z_{n}\rangle|\geqslant \limsup_{m\to \infty}|\langle f_{k_{n_{m}}},z_{k_{n_{m}}}\rangle|\geqslant \frac{c}{2}.$$
Since $c$ is arbitrary, we arrive at $\beta((f_{n})_{n=1}^\infty)\geqslant\frac{1}{2}\alpha((f_{n})_{n=1}^\infty).$\end{proof}

\begin{lemma}\label{2.2}
Let $(f_{n})_{n=1}^\infty$ be a bounded sequence in $X^{*}$ and $f\in X^{*}$. Then $$\operatorname{ca}_{\rho^{*}}((f_{n})_{n=1}^\infty)\leqslant 2\alpha((f_{n}-f)_{n=1}^\infty).$$
\end{lemma}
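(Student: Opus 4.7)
The plan is to reduce the statement to a pointwise-in-$K$ estimate and then pass to the supremum over all weakly compact $K\subseteq B_X$. First I would fix an arbitrary $K\in\mathcal{F}_X$ and exploit the trivial algebraic identity $f_k-f_l=(f_k-f)-(f_l-f)$. For each $x\in K$ the triangle inequality gives
$$|\langle f_k-f_l,x\rangle|\leqslant |\langle f_k-f,x\rangle|+|\langle f_l-f,x\rangle|,$$
and because the right-hand side is a sum of two expressions each depending on only one of $k,l$, passing first to the supremum over $x\in K$ and then to the supremum over $k,l\geqslant n$ decouples the two terms and produces a clean factor of $2$:
$$\sup_{k,l\geqslant n}\sup_{x\in K}|\langle f_k-f_l,x\rangle|\leqslant 2\sup_{k\geqslant n}\sup_{x\in K}|\langle f_k-f,x\rangle|.$$

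Next I would take $\inf_n$ on both sides. On the right the standard identity $\inf_n\sup_{k\geqslant n}a_k=\limsup_k a_k$, applied to $a_k:=\sup_{x\in K}|\langle f_k-f,x\rangle|$, recovers exactly the quantity $\limsup_n\sup_{x\in K}|\langle f_n-f,x\rangle|$ that appears inside the definition of $\alpha((f_n-f)_{n=1}^\infty)$. Hence for every fixed $K\in\mathcal{F}_X$,
$$\inf_n\sup_{k,l\geqslant n}\sup_{x\in K}|\langle f_k-f_l,x\rangle|\leqslant 2\,\alpha\bigl((f_n-f)_{n=1}^\infty\bigr).$$
Taking the supremum of the left-hand side over all $K\in\mathcal{F}_X$ then yields precisely $\operatorname{ca}_{\rho^*}((f_n)_{n=1}^\infty)\leqslant 2\alpha((f_n-f)_{n=1}^\infty)$, which is the desired inequality.

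There is no real obstacle to this proof: it is a direct consequence of the triangle inequality combined with careful quantifier bookkeeping. The only place one might slip is confusing the double supremum $\sup_{k,l\geqslant n}$ with a single one; this is why it is crucial to split the two terms pointwise in $x$ \emph{first}, then take the sup over $x\in K$, and only afterwards split the sup over $(k,l)$ into two independent one-index suprema. After that, the sole nontrivial ingredient is that the $\inf_n\sup_{k\geqslant n}$ on the right-hand side collapses to the $\limsup$ appearing in the definition of $\alpha$, which matches the quantity we wish to bound against.
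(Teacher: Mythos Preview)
Your proof is correct and follows essentially the same approach as the paper's: both arguments rest on the identity $f_k-f_l=(f_k-f)-(f_l-f)$ together with the triangle inequality, the only difference being that the paper phrases it via an arbitrary $c>\alpha((f_n-f)_{n=1}^\infty)$ while you manipulate the suprema and infima directly.
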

\begin{proof}
Let $c>\alpha((f_{n}-f)_{n=1}^\infty)$ be arbitrary. Let $K\in \mathcal{F}_{X}$. Then there exists a positive integer $n$ so that $\sup\limits_{x\in K}|\langle f_{k}-f,x\rangle|<c$ for all $k\geqslant n$. Hence, for $k,l\geqslant n$, we get
$$\sup\limits_{x\in K}|\langle f_{k}-f_{l},x\rangle|=\sup\limits_{x\in K}|\langle (f_{k}-f)-(f_{l}-f),x\rangle|\leqslant 2c.$$
This implies that $\operatorname{ca}_{\rho^{*}}((f_{n})_{n=1}^\infty)\leqslant 2c$. As $c$ was arbitrary, the proof is complete.
\end{proof}

\begin{lemma}\label{2.3}
Suppose that $(f_{n})_{n=1}^\infty$ converges  to $f\in X^{*}$ in the weak* topology. Then $$\alpha((f_{n}-f)_{n=1}^\infty)\leqslant \operatorname{ca}_{\rho^{*}}((f_{n})_{n=1}^\infty).$$
\end{lemma}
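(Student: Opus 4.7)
The plan is straightforward: reduce a Cauchy-type estimate to a limit-type estimate by exploiting the fact that the weak* limit $f$ of $(f_n)_{n=1}^\infty$ can be realized pointwise as the limit of the $f_l$ along $l \to \infty$.

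First I would fix an arbitrary $K \in \mathcal{F}_X$ and pick any $c > \operatorname{ca}_{\rho^*}((f_n)_{n=1}^\infty)$. By definition of $\operatorname{ca}_{\rho^*}$, there exists a positive integer $n_0$ such that
\[
\sup_{x \in K} |\langle f_k - f_l, x\rangle| \leqslant c \quad \text{for all } k,l \geqslant n_0.
\]

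Next, for each fixed $n \geqslant n_0$ and each $x \in K$, I would use the weak* convergence $f_l \to f$ to write $\langle f_n - f, x\rangle = \lim_{l \to \infty} \langle f_n - f_l, x\rangle$. Passing to the limit in the inequality $|\langle f_n - f_l, x\rangle| \leqslant c$ (valid once $l \geqslant n_0$) yields $|\langle f_n - f, x\rangle| \leqslant c$. Taking the supremum over $x \in K$ and then the $\limsup$ over $n$, one gets
\[
\limsup_{n \to \infty} \sup_{x \in K} |\langle f_n - f, x\rangle| \leqslant c.
\]
Finally, taking the supremum over $K \in \mathcal{F}_X$ and letting $c \downarrow \operatorname{ca}_{\rho^*}((f_n)_{n=1}^\infty)$ gives the claimed inequality $\alpha((f_n - f)_{n=1}^\infty) \leqslant \operatorname{ca}_{\rho^*}((f_n)_{n=1}^\infty)$.

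There is essentially no obstacle here: the only subtlety is the order of operations (one must fix $n \geqslant n_0$ \emph{before} letting $l \to \infty$, so that the inequality $|\langle f_n - f_l, x\rangle| \leqslant c$ applies), and the fact that while the inequality is uniform in $x \in K$, taking the pointwise limit in $l$ preserves the bound pointwise in $x$, which is still enough to control the supremum over $K$.
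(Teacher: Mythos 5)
Your proof is correct and follows essentially the same route as the paper's: fix $K$, use the Cauchy-type bound $\sup_{x\in K}|\langle f_k-f_l,x\rangle|\leqslant c$ for $k,l$ large, let $l\to\infty$ pointwise in $x$ via weak* convergence, and then pass to the supremum over $K$ and let $c$ decrease. The subtlety you flag (fixing $n$ before taking the limit in $l$, and the pointwise-in-$x$ passage to the limit sufficing for the supremum bound) is exactly how the paper's argument proceeds.
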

\begin{proof}
Let $c>\operatorname{ca}_{\rho^{*}}((f_{n})_{n=1}^\infty )$ be arbitrary. Let $K\in \mathcal{F}_{X}$. Then there exists a positive integer $n$ so that $\sup\limits_{x\in K}|\langle f_{k}-f_{l},x\rangle|<c$ for all $k,l\geqslant n$. Hence, for each $x\in K$, we get $|\langle f_{k}-f_{l},x\rangle|<c$ for all $k,l\geqslant n$.
Letting $l\rightarrow \infty$, we get $|\langle f_{k}-f,x\rangle|\leqslant c$. This means that $\sup\limits_{x\in K}|\langle f_{k}-f,x\rangle|\leqslant c$ for all $k\geqslant n$ and, consequently, $\alpha\big((f_{n}-f)_{n=1}^\infty\big)\leqslant c$. As $c$ was arbitrary, the proof is finished.\end{proof}

\begin{definition}
Let $X$ be a Banach space. We set
$$K_{1}(X)=\sup_{(f_{n})_{n=1}^\infty \subseteq B_{X^{*}} \atop \operatorname{weak}^{*}\textrm{null}}\inf_{(g_{n})_{n=1}^\infty \in \operatorname{cbs}((f_{n})_{n=1}^\infty )}\alpha\big((g_{n})_{n=1}^\infty\big),$$

$$K_{2}(X)=\sup_{(f_{n})_{n=1}^\infty \subseteq B_{X^{*}} \atop \operatorname{weak}^{*}\textrm{null}}\inf_{(g_{n})_{n=1}^\infty \in \operatorname{cbs}((f_{n})_{n=1}^\infty )}\beta\big((g_{n})_{n=1}^\infty\big),$$

and

$$K_{3}(X)=\sup_{(f_{n})_{n=1}^\infty \subseteq B_{X^{*}} \atop \operatorname{weak}^{*}\textrm{Cauchy}}\inf_{(g_{n})_{n=1}^\infty \in \operatorname{cbs}((f_{n})_{n=1}^\infty )}\operatorname{ca}_{\rho^{*}}\big((g_{n})_{n=1}^\infty \big).$$
\end{definition}
The three quantities $K_{1},K_{2}$, and $K_{3}$ are actually equivalent.

\begin{proposition}\label{prop:K123}
Let $X$ be a Banach space. Then
\begin{enumerate}[label=(\roman*)]
\item\label{prop:K123:i} $K_{2}(X)\leqslant K_{1}(X)\leqslant 2K_{2}(X),$
\item\label{prop:K123:ii} $K_{1}(X)\leqslant K_{3}(X)\leqslant 4K_{1}(X).$
\end{enumerate}
\end{proposition}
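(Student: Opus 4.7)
The overall plan is to combine the three preceding lemmas with the elementary observation that a convex block subsequence of a weak$^{*}$ null (respectively weak$^{*}$ Cauchy) sequence is again weak$^{*}$ null (respectively weak$^{*}$ Cauchy, with the same weak$^{*}$ limit, since the weak$^{*}$ topology is linear and each convex combination involves only finitely many terms from the tail). This allows pointwise inequalities between $\alpha$, $\beta$, and $\operatorname{ca}_{\rho^{*}}$ to propagate through infima over $\operatorname{cbs}((f_{n})_{n=1}^\infty)$ and suprema over the relevant classes of sequences in $B_{X^{*}}$.

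For part \ref{prop:K123:i}, I would fix an arbitrary weak$^{*}$ null sequence $(f_{n})_{n=1}^\infty\subseteq B_{X^{*}}$; any $(g_{n})_{n=1}^\infty\in\operatorname{cbs}((f_{n})_{n=1}^\infty)$ is again weak$^{*}$ null, so Lemma \ref{2.1} yields $\beta((g_{n})_{n=1}^\infty)\leqslant\alpha((g_{n})_{n=1}^\infty)\leqslant 2\beta((g_{n})_{n=1}^\infty)$. Passing to the infimum over $(g_{n})_{n=1}^\infty\in\operatorname{cbs}((f_{n})_{n=1}^\infty)$ and then to the supremum over $(f_{n})_{n=1}^\infty$ gives $K_{2}(X)\leqslant K_{1}(X)\leqslant 2K_{2}(X)$ at once.

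For part \ref{prop:K123:ii}, the inequality $K_{1}(X)\leqslant K_{3}(X)$ is immediate: a weak$^{*}$ null sequence is weak$^{*}$ Cauchy with limit $0$, so applying Lemma \ref{2.3} with $f=0$ to each convex block subsequence $(g_{n})_{n=1}^\infty$ gives $\alpha((g_{n})_{n=1}^\infty)\leqslant\operatorname{ca}_{\rho^{*}}((g_{n})_{n=1}^\infty)$, and the supremum defining $K_{3}$ ranges over a larger class. For the reverse bound $K_{3}(X)\leqslant 4K_{1}(X)$, I would start from an arbitrary weak$^{*}$ Cauchy sequence $(f_{n})_{n=1}^\infty\subseteq B_{X^{*}}$; since bounded weak$^{*}$ Cauchy sequences in $X^{*}$ are weak$^{*}$ convergent, its weak$^{*}$ limit $f$ exists and lies in $B_{X^{*}}$, so $\bigl(\tfrac{1}{2}(f_{n}-f)\bigr)_{n=1}^\infty$ is a weak$^{*}$ null sequence contained in $B_{X^{*}}$. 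Given $\varepsilon>0$, the definition of $K_{1}$ furnishes a convex block subsequence $(k_{n})_{n=1}^\infty$ of this rescaled sequence with $\alpha((k_{n})_{n=1}^\infty)\leqslant K_{1}(X)+\varepsilon$. Performing the same convex combinations on $(f_{n})_{n=1}^\infty$ produces $(g_{n})_{n=1}^\infty\in\operatorname{cbs}((f_{n})_{n=1}^\infty)$ with $g_{n}-f=2k_{n}$, so $(g_{n}-f)_{n=1}^\infty$ is weak$^{*}$ null. Lemma \ref{2.2} then bounds
\[\operatorname{ca}_{\rho^{*}}((g_{n})_{n=1}^\infty)\leqslant 2\alpha((g_{n}-f)_{n=1}^\infty)=4\alpha((k_{n})_{n=1}^\infty)\leqslant 4K_{1}(X)+4\varepsilon,\]
and passing to the infimum over $\operatorname{cbs}((f_{n})_{n=1}^\infty)$, the supremum over weak$^{*}$ Cauchy $(f_{n})_{n=1}^\infty\subseteq B_{X^{*}}$, and then letting $\varepsilon\to 0$ concludes.

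The only point requiring any genuine attention is the bookkeeping of constants in $K_{3}\leqslant 4K_{1}$: the factor $4$ arises as the product of the rescaling factor $2$ used to place $(f_{n}-f)_{n=1}^\infty$ inside $B_{X^{*}}$ so that the definition of $K_{1}$ is applicable, and the factor $2$ coming from the triangle-type inequality in Lemma \ref{2.2}. Everything else is a direct transcription of Lemmas \ref{2.1}, \ref{2.2}, and \ref{2.3} through the appropriate infima and suprema.
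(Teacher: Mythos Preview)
Your proposal is correct and follows essentially the same approach as the paper: part \ref{prop:K123:i} is derived directly from Lemma \ref{2.1}, the inequality $K_{1}(X)\leqslant K_{3}(X)$ from Lemma \ref{2.3}, and $K_{3}(X)\leqslant 4K_{1}(X)$ by rescaling $(f_{n}-f)_{n}$ into $B_{X^{*}}$ and combining the resulting convex block subsequence bound with Lemma \ref{2.2}. The only cosmetic difference is that the paper argues the last inequality by fixing $c<K_{3}(X)$ and showing $c\leqslant 4K_{1}(X)$, whereas you work directly with an $\varepsilon$-approximation; the underlying computation and the origin of the factor $4=2\cdot 2$ are identical.
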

\begin{proof}
The statement \ref{prop:K123:i} follows from Lemma \ref{2.1}. It suffices to prove \ref{prop:K123:ii}.

It follows from Lemma \ref{2.3} that $K_{1}(X)\leqslant K_{3}(X)$.
Let $0<c<K_{3}(X)$. Then there exists a weak$^{*}$-Cauchy sequence $(f_{n})_{n=1}^\infty$ in $B_{X^{*}}$ so that for every $
(h_{n})_{n=1}^\infty\in \operatorname{cbs}((f_{n})_{n=1}^\infty )$ we have $\operatorname{ca}_{\rho^{*}}((h_{n})_{n=1}^\infty )>c$. Clearly, $(f_{n})_{n=1}^\infty$ converges to some $f\in B_{X^{*}}$ in the weak* topology. Take any
$(g_{n})_{n=1}^\infty \in \operatorname{cbs}((\frac{1}{2}(f_{n}-f))_{n=1}^\infty)$. Then $2g_{n}=h_{n}-f$ ($n\in \mathbb N$), where $(h_{n})_{n=1}^\infty\in
\operatorname{cbs}((f_{n})_{n=1}^\infty )$. By Lemma \ref{2.2}, we get
$$c<\operatorname{ca}_{\rho^{*}}\big((h_{n})_{n=1}^\infty \big)=2\operatorname{ca}_{\rho^{*}}\big((g_{n})_{n=1}^\infty \big)\leqslant 4\alpha\big((g_{n})_{n=1}^\infty\big).$$
Hence $c\leqslant 4K_{1}(X)$. Since $c$ is arbitrary, we get $K_{3}(X)\leqslant 4K_{1}(X).$\end{proof}

The subsequent result implies that the quantities $K_{1},K_{2}$, and $K_{3}$ do characterise property $(K)$.

\begin{theorem}\label{2.4}
A Banach space $X$ has property $(K)$ if and only if $K_{1}(X)=0$.
\end{theorem}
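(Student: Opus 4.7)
The forward implication is a direct restatement of characterisation \ref{K:c}: if $X$ has property $(K)$ and $(f_n)_{n=1}^\infty \subseteq B_{X^*}$ is weak$^*$ null, then $(f_n)_{n=1}^\infty$ admits a convex block subsequence that is $\mu(X^*,X)$-null, equivalently one with $\alpha = 0$; hence every infimum in the definition of $K_1(X)$ vanishes and $K_1(X) = 0$.

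For the converse, suppose $K_1(X) = 0$ and fix a weak$^*$ null sequence $(f_n)_{n=1}^\infty \subseteq B_{X^*}$ (normalisation is harmless by uniform boundedness). A convex block subsequence of a weak$^*$ null sequence in $B_{X^*}$ is again weak$^*$ null and in $B_{X^*}$, so iterated application of $K_1(X) = 0$ produces a chain $(f_n^{(0)})_{n=1}^\infty := (f_n)_{n=1}^\infty$ and $(f_n^{(k)})_{n=1}^\infty \in \operatorname{cbs}((f_n^{(k-1)})_{n=1}^\infty)$ with $\alpha((f_n^{(k)})_{n=1}^\infty) < 2^{-k}$ for each $k \geqslant 1$. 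Since the composition of two convex block selections is again a convex block selection, for every $0 \leqslant m < k$ the element $f_n^{(k)}$ may be written as a convex combination of $(f_j^{(m)})_{j \in J(m,k,n)}$ for a uniquely determined interval $J(m,k,n) \subseteq \mathbb{N}$; moreover $J(m,k,1) < J(m,k,2) < \cdots$ are consecutive, so $\min J(m,k,n) \to \infty$ as $n \to \infty$ for each fixed pair $(m,k)$.

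Diagonalise by inductively selecting $n_k$ so that (i) $\min J(0,k,n_k) > \max J(0,k-1,n_{k-1})$ and (ii) $\min J(m,k,n_k) \geqslant k$ for every $1 \leqslant m \leqslant k$. Both sets of constraints reduce to taking $n_k$ sufficiently large, so they are simultaneously satisfiable. Condition (i) ensures that $h_k := f_{n_k}^{(k)}$ is a convex block subsequence of $(f_n)_{n=1}^\infty$. To verify $\alpha((h_k)_{k=1}^\infty) = 0$, fix $K \in \mathcal{F}_X$ and $\varepsilon > 0$, pick $m$ with $2^{-m} < \varepsilon$, and choose $N_{K,m}$ with $\sup_{x \in K} |\langle f_j^{(m)}, x\rangle| < 2^{-m}$ for $j \geqslant N_{K,m}$; such $N_{K,m}$ exists because $\limsup_j \sup_{x \in K} |\langle f_j^{(m)}, x\rangle| \leqslant \alpha((f_n^{(m)})_{n=1}^\infty) < 2^{-m}$. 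For $k \geqslant \max(m, N_{K,m})$ condition (ii) gives $\min J(m,k,n_k) \geqslant k \geqslant N_{K,m}$, so expanding $h_k$ as a convex combination of the relevant $(f_j^{(m)})$'s yields $\sup_{x \in K} |\langle h_k, x\rangle| < 2^{-m} < \varepsilon$. Hence $\limsup_k \sup_{x \in K} |\langle h_k, x\rangle| = 0$ for every $K \in \mathcal{F}_X$, i.e.\ $\alpha((h_k)_{k=1}^\infty) = 0$, and characterisation \ref{K:c} delivers property $(K)$.

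The principal obstacle is condition (ii). The bound $\alpha((f_n^{(m)})_{n=1}^\infty) < 2^{-m}$ is only \emph{pointwise} in $K$, with the tail threshold $N_{K,m}$ depending on $K$, so no uniform-in-$K$ tail estimate is available to drive a naive diagonal. Forcing the inner expansion of $h_k$ at every prior level to start past the index $k$ is precisely what allows the estimate on a single weakly compact set to propagate through the diagonalisation, because once $k$ overtakes the $K$-dependent threshold $N_{K,m}$ the smallness on $K$ is inherited automatically.
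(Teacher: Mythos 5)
Your proof is correct and follows essentially the same route as the paper: iterate the hypothesis $K_{1}(X)=0$ to produce a nested chain of convex block subsequences with $\alpha<2^{-k}$, diagonalise, and conclude via clause \ref{K:c}. The only difference is cosmetic: the paper takes the plain diagonal $g_{n}=f^{(n)}_{n}$ and invokes Lemmas \ref{2.5} and \ref{2.6} (since the blocks of the diagonal automatically escape to infinity, the per-$K$ tail argument goes through and your condition (ii) is in fact automatic), whereas you enforce the index conditions by hand.
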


To prove Theorem \ref{2.4}, we require two elementary lemmata whose proofs are omitted.

\begin{lemma}\label{2.5}
If $(y_{n})_{n=1}^\infty \in \operatorname{cbs}((x_{n})_{n=1}^\infty )$, then $\operatorname{cbs}((y_{n})_{n=1}^\infty )\subseteq \operatorname{cbs}((x_{n})_{n=1}^\infty )$. More precisely, if
$y_{n}\in \conv(x_{i})_{i=k_{n-1}+1}^{k_{n}}$ and $z_{n}\in \conv(y_{j})_{j=m_{n-1}+1}^{m_{n}}$, then $z_{n}\in \conv
(x_{i})_{i=k_{m_{n-1}}+1}^{k_{m_{n}}}.$
\end{lemma}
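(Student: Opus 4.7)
The plan is to prove Lemma \ref{2.5} by direct computation, expanding each $z_{n}$ as an explicit convex combination of the $x_{i}$'s and then tracking the range of indices that actually appear. The lemma has both a set-theoretic statement ($\operatorname{cbs}((y_{n}))\subseteq\operatorname{cbs}((x_{n}))$) and a refined index-tracking statement, and the latter clearly implies the former, so I would focus on establishing the refined version.

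First I would fix the two layers of data. Since $(y_{n})\in\operatorname{cbs}((x_{n}))$, there is a strictly increasing sequence $0=k_{0}<k_{1}<k_{2}<\cdots$ with $y_{n}=\sum_{i=k_{n-1}+1}^{k_{n}}\alpha_{n,i}x_{i}$, where $\alpha_{n,i}\geqslant 0$ and $\sum_{i=k_{n-1}+1}^{k_{n}}\alpha_{n,i}=1$. Similarly, since $(z_{n})\in\operatorname{cbs}((y_{n}))$, there is a strictly increasing sequence $0=m_{0}<m_{1}<m_{2}<\cdots$ with $z_{n}=\sum_{j=m_{n-1}+1}^{m_{n}}\beta_{n,j}y_{j}$, where $\beta_{n,j}\geqslant 0$ and $\sum_{j=m_{n-1}+1}^{m_{n}}\beta_{n,j}=1$.

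The key step is then to substitute the first expansion into the second:
$$z_{n}=\sum_{j=m_{n-1}+1}^{m_{n}}\beta_{n,j}\sum_{i=k_{j-1}+1}^{k_{j}}\alpha_{j,i}x_{i}=\sum_{j=m_{n-1}+1}^{m_{n}}\sum_{i=k_{j-1}+1}^{k_{j}}\beta_{n,j}\alpha_{j,i}\,x_{i}.$$
I would then verify two things. First, the coefficients $\beta_{n,j}\alpha_{j,i}$ are non-negative and sum to $1$, since
$$\sum_{j=m_{n-1}+1}^{m_{n}}\sum_{i=k_{j-1}+1}^{k_{j}}\beta_{n,j}\alpha_{j,i}=\sum_{j=m_{n-1}+1}^{m_{n}}\beta_{n,j}\cdot 1=1.$$
Second, I would track the range of indices $i$ that appear. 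As $j$ runs through $\{m_{n-1}+1,\ldots,m_{n}\}$, the index $i$ ranges through $\{k_{j-1}+1,\ldots,k_{j}\}$; since the $k_{j}$'s are strictly increasing, these blocks for consecutive $j$'s are disjoint and together cover precisely $\{k_{m_{n-1}}+1,\ldots,k_{m_{n}}\}$ (the smallest $i$ occurs when $j=m_{n-1}+1$, giving lower bound $k_{m_{n-1}}+1$; the largest when $j=m_{n}$, giving upper bound $k_{m_{n}}$). This gives exactly $z_{n}\in\conv(x_{i})_{i=k_{m_{n-1}}+1}^{k_{m_{n}}}$, which is the claimed refinement.

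Finally, to conclude that $(z_{n})\in\operatorname{cbs}((x_{n}))$, I would note that the integer sequence $(k_{m_{n}})_{n=0}^{\infty}$ (with the convention $k_{m_{0}}=k_{0}=0$) is strictly increasing, since both $(k_{n})$ and $(m_{n})$ are, and the block $\{k_{m_{n-1}}+1,\ldots,k_{m_{n}}\}$ for $z_{n}$ is a contiguous set of positive integers strictly to the right of the block for $z_{n-1}$. This is pure index bookkeeping — there is no genuine obstacle here, only the need to pick notation that keeps the two layers of indexing transparent; the main thing to be careful about is not confusing $k_{j-1}$ (which governs the $y_{j}$-layer) with $k_{m_{n-1}}$ (which governs the aggregated bound for $z_{n}$).
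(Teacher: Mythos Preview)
Your argument is correct and is exactly the natural bookkeeping one would carry out; the paper itself omits the proof entirely, calling the lemma elementary, so there is nothing to compare against beyond noting that your write-up supplies the details the authors chose to suppress.
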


\begin{lemma}\label{2.6}
Let $(f_{n})_{n=1}^\infty$ be a bounded sequence in $X^{*}$. Then
$$\alpha\big((g_{n})_{n=1}^\infty\big)\leqslant \alpha\big((f_{n})_{n=1}^\infty \big), \quad \big((g_{n})_{n=1}^\infty \in \operatorname{cbs}((f_{n})_{n=1}^\infty )\big).$$
\end{lemma}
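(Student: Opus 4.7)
The plan is to unwind the definition of $\alpha$ and exploit the triangle inequality together with the fact that each block is a \emph{convex} combination of tail entries of $(f_n)$. Fix a convex block subsequence $(g_n)_{n=1}^\infty$ of $(f_n)_{n=1}^\infty$, so there is a strictly increasing sequence $(k_n)_{n=0}^\infty$ with $k_0=0$ and nonnegative scalars $(\lambda_i^{(n)})$ with $\sum_{i=k_{n-1}+1}^{k_n}\lambda_i^{(n)}=1$ so that $g_n=\sum_{i=k_{n-1}+1}^{k_n}\lambda_i^{(n)} f_i$.

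First I would fix an arbitrary $c>\alpha((f_n)_{n=1}^\infty)$ and an arbitrary $K\in\mathcal F_X$. By the definition of $\alpha$, there exists $N\in\mathbb N$ such that $\sup_{x\in K}|\langle f_i,x\rangle|\leqslant c$ for every $i\geqslant N$. Since $k_{n-1}\to\infty$, there exists $N'\in\mathbb N$ so that $k_{n-1}+1\geqslant N$ whenever $n\geqslant N'$.

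Next, for $n\geqslant N'$ and any $x\in K$, the triangle inequality combined with convexity of the coefficients yields
$$|\langle g_n,x\rangle|\leqslant \sum_{i=k_{n-1}+1}^{k_n}\lambda_i^{(n)}\,|\langle f_i,x\rangle|\leqslant c\sum_{i=k_{n-1}+1}^{k_n}\lambda_i^{(n)}=c.$$
Taking the supremum over $x\in K$ and then $\limsup$ over $n$, we obtain $\limsup_{n\to\infty}\sup_{x\in K}|\langle g_n,x\rangle|\leqslant c$. As $K\in\mathcal F_X$ was arbitrary, this gives $\alpha((g_n)_{n=1}^\infty)\leqslant c$, and letting $c\downarrow\alpha((f_n)_{n=1}^\infty)$ finishes the argument.

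There is no real obstacle here: the proof is a routine tail estimate leveraging that $\alpha$ is a $\limsup$-type quantity and that each $g_n$ is built from finitely many $f_i$ with indices marching off to infinity. The only minor point worth flagging explicitly is the passage from a fixed-$K$ bound to the supremum over $\mathcal F_X$, which works because the chosen $N'$ depends on $K$ but the upper bound $c$ does not.
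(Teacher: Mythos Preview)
Your argument is correct and is exactly the straightforward tail estimate one expects; the paper itself omits the proof of this lemma as elementary, so there is nothing to compare.
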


\begin{proof}[Proof of Theorem \ref{2.4}] The necessity is trivial, so it suffices to prove the sufficiency.\smallskip

Let $(f_{n})_{n=1}^\infty$ be a weak*-null sequence in $B_{X^{*}}$. Since $K_{1}(X)=0$, for each $k$ we get inductively a sequence
$(f^{(k)}_{n})_{n=1}^\infty $ in $X^{*}$ so that for  $k\in \mathbb N.$
\begin{itemize}
\item $(f^{(1)}_{n})_{n=1}^\infty\in \operatorname{cbs}((f_{n})_{n=1}^\infty ),$
\item $(f^{(k+1)}_{n})_{n=1}^\infty\in \operatorname{cbs}((f^{(k)}_{n})_{n=1}^\infty ),$
\item $\alpha((f^{(k)}_{n})_{n=1}^\infty )<\frac{1}{k}.$
\end{itemize}
For $n\in\mathbb N$, we set $g_{n}=f^{(n)}_{n}$. By Lemma \ref{2.5}, we get $(g_{n})_{n\geqslant k}\in \operatorname{cbs}((f^{(k)}_{n})_{n=1}^\infty )$ for each $k$.
By Lemma \ref{2.6}, we get $$\alpha\big((g_{n})_{n=1}^\infty\big)=\alpha\big((g_{n})_{n\geqslant k}\big)\leqslant \alpha\big((f^{(k)}_{n})_{n=1}^\infty \big)<\tfrac{1}{k} \quad (k=1,2,\ldots).$$
This means that $\alpha\big((g_{n})_{n=1}^\infty\big)=0$ and $(g_{n})_{n=1}^\infty$ is $\mu(X^{*},X)$-null. Consequently, $X$ has property $(K)$.\end{proof}

\begin{example}${}$
\begin{enumerate}[label=(\alph*)]
\item\label{example:a} Let $X$ be a Banach space so that $B_{X^{*}}$ is $\sigma(X^{*},X)$-sequentially compact or  $\ell_1$ does not embed into $X$. If $X$ contains a~subspace isomorphic to $c_{0}$, then $K_{2}(X)=1$. In particular, $$K_{2}(c_{0})=K_{2}(c)=K_{2}(C[0,1])=1.$$
\item\label{example:b} $K_{2}\big(\ell_1({\mathbb{R}}, L_1[0,1])\big) = 1;$ here $\ell_1({\mathbb{R}}, L_1[0,1])$ stands for the $\ell_1$-sum of $\mathfrak{c}$ copies of $L_1[0,1]$.
\end{enumerate}
\end{example}
\begin{proof}
\ref{example:a}. Let $\varepsilon>0$. It follows from \cite[Theorem 6]{DRT} (respectively, \cite[Theorem 2.2]{DF}) that there exists a subspace $Z$ of $X$ so that $Z$ is $(1+\varepsilon)$-isomorphic to $c_{0}$ and a~projection $P$ from $X$ onto $Z$ with $\|P\|\leqslant 1+\varepsilon$. Let $T\colon c_{0}\rightarrow Z$ be an operator so that
$$\frac{1}{1+\varepsilon}\|z\|\leqslant \|Tz\|\leqslant \|z\| \quad (z\in c_{0}).$$
Let $S=T^{-1}P$. Then $ST=I_{c_{0}}$ and $\|S\|\leqslant (1+\varepsilon)^{2}$. For each $n$, we set $f_{n}=\frac{S^{*}e^{*}_{n}}{(1+\varepsilon)^{2}}$, where $(e^{*}_{n})_{n}$ is the unit vector basis of $\ell_1$. Then $(f_{n})_{n=1}^\infty$ is weak$^{*}$ null in $B_{X^{*}}$. Take any $(y^{*}_{n})_{n=1}^\infty\in \operatorname{cbs}((f_{n})_{n=1}^\infty )$ and write $$y^{*}_{n}=\sum\limits_{i=k_{n-1}+1}^{k_{n}}\lambda_{i}f_{i},$$
where $\sum\limits_{i=k_{n-1}+1}^{k_{n}}\lambda_{i}=1,$ and $\lambda_{i}\geqslant 0.$ For every $n$, let $z_{n}=\sum\limits_{i=k_{n-1}+1}^{k_{n}}e_{i},$ where $(e_{n})_{n=1}^\infty$ is the unit vector unit basis of $c_{0}$. Clearly, $(Tz_{n})_{n=1}^\infty$ is weakly null in $B_{X}$. Moreover, for every $n$, we get
$$|\langle y^{*}_{n},Tz_{n}\rangle|=\frac{1}{(1+\varepsilon)^{2}}|\langle \sum\limits_{i=k_{n-1}+1}^{k_{n}}\lambda_{i}e^{*}_{i},\sum\limits_{j=k_{n-1}+1}^{k_{n}}e_{j}\rangle|
=\frac{1}{(1+\varepsilon)^{2}}.$$
This means that $\beta((g_{n})_{n=1}^\infty)\geqslant \frac{1}{(1+\varepsilon)^{2}}$ and so $K_{2}(X)\geqslant \frac{1}{(1+\varepsilon)^{2}}$. Letting $\varepsilon\rightarrow 0$, we get $K_{2}(X)=1$.

\ref{example:b}. Let $\Lambda$ be the set of all strictly increasing sequences $(k_{n})_{n=1}^\infty$ of positive integers with $k_{1}=1$. Set $X=\ell_1({\Lambda}, L_1[0,1]).$ Let $(r_{j})_{j=1}^\infty$ be a sequence of Rademacher functions. Define $(g^{*}_{n})_{n=1}^\infty\subseteq X^{*}$ by $$g^{*}_{n}(t)=r_{j(n,t)},$$ where $t=(k_{m})_{m=1}^\infty\in\Lambda$, $k_{j(n,t)}\leqslant n<k_{j(n,t)+1}.$

Since $g^{*}_{n}(t)\stackrel{\operatorname{weak}^{*}}{\longrightarrow}0$ in $L_{\infty}[0,1]$ ($t\in \Lambda$) and $\|g^{*}_{n}\|=1$ ($n\in \mathbb N$), we get $g^{*}_{n}\stackrel{\operatorname{weak}^{*}}{\longrightarrow}0$. Given $(h^{*}_{m})_{m=1}^\infty\in \operatorname{cbs}((g^{*}_{n})_{n})$, we write  $$h^{*}_{m}=\sum\limits_{j=k^{\circ}_{m}}^{k^{\circ}_{m+1}-1}\lambda_{j}g^{*}_{j}\quad (t_{0}=(k^{\circ}_{m})_{m}\in \Lambda).$$ For each $m$, define $h_{m}\in X$ by $h_{m}(t)=r_{m}$ if $t=t_{0}$ and $h_{m}(t)=0$ otherwise. Then $(h_{m})_{m=1}^\infty$ is weakly null in $B_{X}$. Moreover, $\langle h^{*}_{m},h_{m}\rangle=1$ for each $m$.
This implies that $\beta((h^{*}_{m})_{m=1}^\infty)=1$. Consequently, $K_{2}(X)=1$.

\end{proof}

\section{Quantifying the Grothendieck property and reflexivity}\label{sect:groth}

The following result is a slight improvement on Lemma \ref{3.2}. For the sake of completeness, we include the proof here.

\begin{lemma}\label{3.3}
For a bounded subset $A$ of $X$ the following are equivalent:
\begin{enumerate}
\item $A$ is relatively weakly compact.
\item Every sequence in $A$ admits a convex block subsequence that is norm convergent.
\item Every sequence in $A$ admits a convex block subsequence that is weakly convergent.
\end{enumerate}
\end{lemma}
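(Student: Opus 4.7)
The plan is to prove the cycle $(1)\Rightarrow(2)\Rightarrow(3)\Rightarrow(1)$, where the only step requiring actual work is $(1)\Rightarrow(2)$, since $(2)\Rightarrow(3)$ is automatic and $(3)\Rightarrow(1)$ will follow at once from Lemma~\ref{3.2} after a small bookkeeping remark.

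For $(1)\Rightarrow(2)$, fix a sequence $(x_{n})_{n=1}^\infty$ in $A$. Since $A$ is relatively weakly compact, the Eberlein--\v{S}mulian theorem supplies a subsequence $(x_{n_{j}})_{j=1}^\infty$ converging weakly to some $x\in X$. I will use Mazur's lemma to build a convex block subsequence of $(x_{n})_{n=1}^\infty$ norm-converging to $x$, the point being to choose convex combinations with disjoint, strictly increasing index blocks. Concretely, pick $y_{1}\in\conv(x_{n_{j}}\colon 1\leqslant j\leqslant M_{1})$ with $\|y_{1}-x\|<1$; having chosen $M_{1}<M_{2}<\cdots<M_{k-1}$, apply Mazur's lemma to the weakly null sequence $(x_{n_{j}}-x)_{j\geqslant M_{k-1}+1}$ to obtain $y_{k}\in\conv(x_{n_{j}}\colon M_{k-1}+1\leqslant j\leqslant M_{k})$ with $\|y_{k}-x\|<\tfrac{1}{k}$. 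Setting $k_{0}=0$ and $k_{r}=n_{M_{r}}$ gives a strictly increasing sequence of positive integers with $y_{r}\in\conv(x_{i}\colon k_{r-1}<i\leqslant k_{r})$, because the left endpoint $n_{M_{r-1}+1}$ of the support of $y_{r}$ satisfies $n_{M_{r-1}+1}>n_{M_{r-1}}=k_{r-1}$. Hence $(y_{r})_{r=1}^\infty$ is a convex block subsequence of $(x_{n})_{n=1}^\infty$ that is norm-convergent to $x$, as desired.

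The implication $(2)\Rightarrow(3)$ is immediate, because norm convergence entails weak convergence. For $(3)\Rightarrow(1)$, given any sequence $(x_{n})_{n=1}^\infty$ in $A$ and a weakly convergent convex block subsequence $(y_{n})_{n=1}^\infty$ with $y_{n}\in\conv(x_{i})_{i=k_{n-1}+1}^{k_{n}}$, I observe that $k_{n-1}+1\geqslant n$ (an easy induction on the strictly increasing sequence $(k_{n})_{n=0}^\infty$ with $k_{0}=0$), so in particular $y_{n}\in\conv(x_{i}\colon i\geqslant n)$. Thus $(y_{n})_{n=1}^\infty$ is a weakly convergent sequence of convex combinations of tails of $(x_{n})_{n=1}^\infty$, and Lemma~\ref{3.2}\,(3)$\Rightarrow$(1) yields that $A$ is relatively weakly compact.

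The only non-routine point is to ensure, in $(1)\Rightarrow(2)$, that the Mazur construction delivers a genuine convex \emph{block} subsequence of the original sequence $(x_{n})_{n=1}^\infty$ (and not merely of the extracted subsequence $(x_{n_{j}})_{j=1}^\infty$). The bookkeeping above, by passing to the original indices $k_{r}=n_{M_{r}}$, is what achieves this and turns out to be the only small subtlety in the argument.
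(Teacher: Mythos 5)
Your proof is correct, and for the crucial implication it takes a genuinely different route from the paper. For $(1)\Rightarrow(2)$ you and the paper do essentially the same thing: pass to a weakly convergent subsequence and apply Mazur's lemma; the paper disposes of the bookkeeping by noting that a subsequence is itself a convex block subsequence and invoking Lemma~\ref{2.5} (a convex block subsequence of a convex block subsequence is one of the original sequence), whereas you carry out the index bookkeeping $k_{r}=n_{M_{r}}$ by hand --- both are fine. The real divergence is in $(3)\Rightarrow(1)$: you observe that for a convex block subsequence one has $k_{n-1}+1\geqslant n$, so each $y_{n}$ lies in $\conv(x_{i}\colon i\geqslant n)$, and then simply quote the implication $(3)\Rightarrow(1)$ of Lemma~\ref{3.2} (Diestel--Ruess--Schachermayer). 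The paper instead reproves this implication from scratch: it sets $K=\overline{\conv}(A)$, shows via the weakly convergent convex block subsequence that every $f\in X^{*}$ attains its supremum on $K$, and concludes by James' characterisation of weak compactness through norm-attaining functionals. Your reduction is shorter and makes transparent why Lemma~\ref{3.3} is only a ``slight improvement'' on Lemma~\ref{3.2} (convex block subsequences are a special case of convex combinations of tails), at the cost of resting on Lemma~\ref{3.2} as a black box; the paper's argument is self-contained modulo James' theorem, which is presumably why the authors chose to include it ``for the sake of completeness.''
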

\begin{proof}
$(1)\Rightarrow (2)$. Given a sequence $(x_{n})_{n=1}^\infty $ in $A$. Then $(x_{n})_{n=1}^\infty $ admits a subsequence $(y_{n})_{n=1}^\infty $ that is weakly convergent. By Mazur's lemma, $(y_{n})_{n=1}^\infty $ admits a convex block subsequence $(z_{n})_{n=1}^\infty $ that is norm convergent. It follows from Lemma \ref{2.5} that $(z_{n})_{n=1}^\infty $ is a convex block subsequence of $(x_{n})_{n=1}^\infty $.

$(2)\Rightarrow (3)$ is trivial. It remains to prove $(3)\Rightarrow (1)$.

Let $K=\overline{\conv}(A)$. Given any $f\in X^{*}$. We let $c=\sup\limits_{x\in K}\langle f,x\rangle=\sup\limits_{x\in A}\langle f,x\rangle$.
Choose a~sequence $(x_{n})_{n=1}^\infty $ in $A$ so that $\langle f,x_{n}\rangle \rightarrow c$. By the assumption, there exists a sequence $(z_{n})_{n=1}^\infty \in \operatorname{cbs}
((x_{n})_{n=1}^\infty )$ so that $(z_{n})_{n=1}^\infty $ converges weakly to some $x\in K$. It is easy to see that $\langle f,z_{n}\rangle \rightarrow c$. Hence $c=\langle f,x\rangle.$ It follows from James' characterisation of weak compactness via norm-attaining functionals that $K$ is weakly compact and so $A$ is relatively weakly compact.\end{proof}

\begin{proposition}\label{3.11}
A Banach space $X$ has the Grothendieck property if and only if every weak$^{*}$ null sequence in $X^{*}$ admits a convex block subsequence that is norm null.
\end{proposition}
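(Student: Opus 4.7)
For the forward implication, suppose $X$ has the Grothendieck property, and let $(f_n)_{n=1}^\infty$ be a weak$^*$ null sequence in $X^*$. By assumption, $(f_n)_{n=1}^\infty$ is weakly null, and Mazur's lemma produces a convex block subsequence that is norm null. This direction is immediate.

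For the reverse implication, my plan is to show that any weak$^*$ null sequence $(f_n)_{n=1}^\infty$ in $X^*$ is in fact weakly null. The key step is to verify that the range $A=\{f_n\colon n\in\mathbb N\}$ is relatively weakly compact in $X^*$, and then to combine this with uniqueness of the weak$^*$ limit to deduce weak convergence. To establish relative weak compactness, I would invoke Lemma~\ref{3.3} (applied to the bounded set $A$ inside the Banach space $X^*$): it suffices to show that every sequence in $A$ admits a weakly convergent convex block subsequence.

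So let $(h_j)_{j=1}^\infty$ be an arbitrary sequence in $A$. By a standard extraction, pass to a subsequence which is either eventually constant (for which any subsequence of itself is a trivially norm-convergent convex block subsequence) or of the form $h_j=f_{n_j}$ with $(n_j)$ strictly increasing. In the latter case, $(h_j)_{j=1}^\infty$ is itself a subsequence of the weak$^*$ null sequence $(f_n)_{n=1}^\infty$, hence weak$^*$ null, so the hypothesis yields a norm null (and a fortiori weakly null) convex block subsequence. Lemma~\ref{3.3} then gives that $A$ is relatively weakly compact.

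To finish, I would apply the Eberlein--\v{S}mulian theorem: every subsequence of $(f_n)_{n=1}^\infty$ has a further subsequence converging weakly to some $g\in X^*$. Since this further subsequence still converges weak$^*$ to $0$ and the weak$^*$ limit is unique, we must have $g=0$. Thus every subsequence of $(f_n)_{n=1}^\infty$ has a weakly null further subsequence, which forces $f_n\to 0$ weakly. Hence $X$ has the Grothendieck property.

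The only mildly subtle point is the reduction in the penultimate paragraph, namely that the hypothesis about weak$^*$ null sequences is enough to cover arbitrary sequences in $A$; once the eventually-constant case is separated out, the reduction to a subsequence of $(f_n)_{n=1}^\infty$ is automatic. Everything else is a bookkeeping assembly of Lemma~\ref{3.3}, Mazur's lemma, and the uniqueness of the weak$^*$ limit.
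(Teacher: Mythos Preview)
Your proof is correct and follows essentially the same route as the paper: both directions rely on Mazur's lemma and Lemma~\ref{3.3}, with the reverse implication obtained by showing that the range of a weak$^*$ null sequence is relatively weakly compact (via Lemma~\ref{3.3} applied in $X^*$) and then concluding weak nullity from uniqueness of the weak$^*$ limit. You are simply more explicit than the paper about two points it leaves implicit---the reduction from arbitrary sequences in $A$ to genuine subsequences of $(f_n)$, and the Eberlein--\v{S}mulian step from relative weak compactness to weak convergence to $0$.
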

\begin{proof}
The necessity follows from Mazur's lemma. It remains to prove the sufficiency.

Given a weak$^{*}$ null sequence $(f_{n})_{n=1}^\infty$ in $X^{*}$ and any subsequence $(h_{n})_{n=1}^\infty$ of $(f_{n})_{n=1}^\infty$. By the hypothesis, $(h_{n})_{n=1}^\infty$ admits a convex block subsequence $(g_{n})_{n=1}^\infty $ that is norm null. By Lemma \ref{3.3}, the sequence $(f_{n})_{n=1}^\infty$ is relatively weakly compact and hence is weakly null. Thus $X$ has the Grothendieck property.\end{proof}

\begin{definition}
Let $X$ be a Banach space. We set $$G(X)=\sup_{(f_{n})_{n=1}^\infty \subseteq B_{X^{*}} \atop \operatorname{weak}^{*}\textrm{null}}\inf_{(g_{n})_{n=1}^\infty \in \operatorname{cbs}((f_{n})_{n=1}^\infty )}\limsup_{n\to \infty}\|g_{n}\|.$$
\end{definition}

The above-defined quantity measures, in a certain sense, how far is a given Banach space from being a Grothendieck space. This quantification of the Grothendieck property is very different from the one proposed by Kruli\v{s}ov\'a (\cite{Bendova:2014}) who introduced the so-called $\lambda$-\emph{Grothendieck spaces} parametrised by $\lambda\geqslant 1$. Every $\lambda$-Grothendieck space is Grothendieck but not every Grothendieck space is $\lambda$-Grothendieck for some $\lambda \geqslant 1$ (\cite[Theorem 1.2]{Bendova:2014}).

\begin{example}${}$
\begin{enumerate}
\item\label{gex:1} $G(c_{0})=1$,
\item\label{gex:2} $G(\ell_1)=1$,
\item\label{gex:3} $G(C[0,1])=1$.
\end{enumerate}
\end{example}
\begin{proof}
\eqref{gex:1} is clear.

For \eqref{gex:2}, let $(s_{n})_{n=1}^\infty$ be the summing basis  of $c_{0}$, that is, $s_n = \sum\limits_{k=1}^n e_k$ ($n\in \mathbb N$). Then $(s_{\omega}-s_{n})_{n=1}^\infty$ is a weak$^{*}$ null sequence in $B_{\ell_{\infty}}$, where $s_\omega$ is the sequence constantly equal to $1$. It is easy to see that for any $(g_{n})_{n=1}^\infty \in \operatorname{cbs}((s_{\omega}-s_{n})_{n=1}^\infty)$ we have $\|g_{n}\|=1$ ($n\in \mathbb N$). Consequently, $G(\ell_1)=1$.

In order to prove \eqref{gex:3}, for the sake of convenience, we consider $C[-1,1]$ instead. For each $n$, we define
\[h_{n}(t)= \left\{ \begin{array}
                    {r@{\quad\quad}l}

 -\frac{n}{2}, & -\frac{1}{n}\leqslant t<0 \\ \frac{n}{2}, & 0\leqslant t\leqslant \frac{1}{n} \\ 0, & \text{otherwise}
 \end{array} \right. \]
and
\[\varphi(t)= \left\{ \begin{array}
                    {r@{\quad\quad}l}

 -1, & -1\leqslant t<0 \\ 1, & 0\leqslant t\leqslant 1
 \end{array} \right. \]
Let $\nu$ be the Lebesgue measure. A routine argument shows that $\lim\limits_{n\rightarrow\infty}\int fh_{n}\,{\rm d}\nu=0$ for all $f\in C[-1,1]$, which means that $(h_{n})_{n=1}^\infty $ is a weak$^{*}$ null sequence in $B_{C[-1,1]^{*}}$ if we view each $h_{n}\in L_1[-1,1]$ as an element of $C[-1,1]^{*}$. Clearly, $\int \varphi\cdot h_{n}d\nu=1$ for each $n$. Take any $(\nu_{n})_{n=1}^\infty\in \operatorname{cbs}((h_{n})_{n=1}^\infty )$ and write $\nu_{n}=\sum\limits_{i=k_{n-1}+1}^{k_{n}}\lambda_{i}h_{i}$. Then
$$\langle \varphi,\nu_{n}\rangle=\sum\limits_{i=k_{n-1}+1}^{k_{n}}\lambda_{i}\langle \varphi,h_{i}\rangle=1\quad (n\in \mathbb N),$$
which implies that $\|\nu_{n}\|=1$ if we regard $\varphi$ as an element of $B_{C[-1,1]^{**}}$. We have thus proved that $G(C[-1,1])=1$.\end{proof}

We are going to use $G$ to quantify how far is a~given Banach space from being a~Grothendieck space.

\begin{theorem}
A Banach space $X$ has the Grothendieck property if and only if $G(X)=0$.
\end{theorem}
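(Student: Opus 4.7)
The plan is to prove both implications directly, mimicking the structure of the proof of Theorem~\ref{2.4} and using Proposition~\ref{3.11} to translate the Grothendieck property into a statement about norm-null convex block subsequences.

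For the necessity, suppose $X$ has the Grothendieck property and fix a weak$^{*}$ null sequence $(f_{n})_{n=1}^\infty$ in $B_{X^{*}}$. By hypothesis, $(f_{n})_{n=1}^\infty$ is weakly null, hence Mazur's lemma yields a convex block subsequence $(g_{n})_{n=1}^\infty$ that is norm null. Thus $\limsup_{n}\|g_{n}\|=0$ and the inner infimum in the definition of $G(X)$ vanishes, so $G(X)=0$.

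For the sufficiency, assume $G(X)=0$. By Proposition~\ref{3.11}, it suffices to show that every weak$^{*}$ null sequence in $X^{*}$ admits a convex block subsequence that is norm null. Let $(f_{n})_{n=1}^\infty$ be such a sequence; rescaling we may assume $(f_{n})_{n=1}^\infty \subseteq B_{X^{*}}$. We construct inductively sequences $(f^{(k)}_{n})_{n=1}^\infty$ in $B_{X^{*}}$ such that
\begin{itemize}
\item $(f^{(1)}_{n})_{n=1}^\infty \in \operatorname{cbs}((f_{n})_{n=1}^\infty)$ with $\limsup_{n}\|f^{(1)}_{n}\|<1$,
\item $(f^{(k+1)}_{n})_{n=1}^\infty \in \operatorname{cbs}((f^{(k)}_{n})_{n=1}^\infty)$ with $\limsup_{n}\|f^{(k+1)}_{n}\|<\tfrac{1}{k+1}$.
\end{itemize}
At each step this is possible because $(f^{(k)}_{n})_{n=1}^\infty$ is a weak$^{*}$ null sequence in $B_{X^{*}}$ (weak$^{*}$ nullness is preserved under convex block subsequences) and $G(X)=0$. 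Setting $g_{n}=f^{(n)}_{n}$ as a diagonal, Lemma~\ref{2.5} gives $(g_{n})_{n\geqslant k}\in \operatorname{cbs}((f^{(k)}_{n})_{n=1}^\infty)$ for every $k$.

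The key remaining step is the norm analogue of Lemma~\ref{2.6}: if $(h_{n})_{n=1}^\infty\in \operatorname{cbs}((\varphi_{n})_{n=1}^\infty)$ with $h_{n}=\sum_{i=k_{n-1}+1}^{k_{n}}\lambda_{i}\varphi_{i}$, then by the triangle inequality
$$\|h_{n}\|\leqslant \sum_{i=k_{n-1}+1}^{k_{n}}\lambda_{i}\|\varphi_{i}\|\leqslant \sup_{i\geqslant k_{n-1}+1}\|\varphi_{i}\|,$$
and hence $\limsup_{n}\|h_{n}\|\leqslant \limsup_{n}\|\varphi_{n}\|$. Applying this with $\varphi_{n}=f^{(k)}_{n}$ yields
$$\limsup_{n\to \infty}\|g_{n}\|=\limsup_{n\geqslant k}\|g_{n}\|\leqslant \limsup_{n\to \infty}\|f^{(k)}_{n}\|<\tfrac{1}{k}\quad (k=1,2,\ldots),$$
so $\|g_{n}\|\to 0$. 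Proposition~\ref{3.11} now concludes that $X$ has the Grothendieck property. There is no real obstacle here since $G$ is modelled on $K_{1}$ and the diagonal argument of Theorem~\ref{2.4} carries over verbatim once the elementary norm estimate above is in place.
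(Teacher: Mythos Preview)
Your proof is correct and follows essentially the same diagonal argument as the paper: both directions invoke Proposition~\ref{3.11}, and the sufficiency is handled by the same inductive construction of nested convex block subsequences $(f^{(k)}_{n})_{n}$ with $\limsup_{n}\|f^{(k)}_{n}\|<1/k$, followed by the diagonal $g_{n}=f^{(n)}_{n}$ and Lemma~\ref{2.5}. The only difference is cosmetic: you spell out the elementary norm estimate $\limsup_{n}\|h_{n}\|\leqslant \limsup_{n}\|\varphi_{n}\|$ for convex block subsequences, which the paper uses implicitly.
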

\begin{proof}
The necessary implication follows from Proposition \ref{3.11}.

Suppose that $G(X)=0$. Given a weak$^{*}$ null sequence $(f_{n})_{n=1}^\infty$ in $B_{X^{*}}$, by induction, for each $k$, we get a sequence $(f^{(k)}_{n})_{n=1}^\infty $ so that for all $k=1,2,\ldots,$
\begin{itemize}
    \item $(f^{(1)}_{n})_{n=1}^\infty\in \operatorname{cbs}\big((f_{n})_{n=1}^\infty \big),$
    \item $(f^{(k+1)}_{n})_{n=1}^\infty\in \operatorname{cbs}\big((f^{(k)}_{n})_{n=1}^\infty \big),$
    \item $\limsup\limits_{n\to \infty}\|f^{(k)}_{n}\|<\frac{1}{k}.$
\end{itemize}

For each $n$, we set $h_{n}=f_{n}^{(n)}$. By Lemma \ref{2.5},  $(h_{n})_{n\geqslant k}\in \operatorname{cbs}((f^{(k)}_{n})_{n=1}^\infty )$ for each $k$.
Hence $$\limsup_{n\to \infty}\|h_{n}\|\leqslant \limsup_{n\to \infty}\|f^{(k)}_{n}\|<\tfrac{1}{k}\qquad (k\in \mathbb N).$$
This implies that $(h_{n})_{n=1}^\infty$ is a convex block subsequence of $(f_{n})_{n=1}^\infty$ that converges to $0$ in norm. Again by Proposition \ref{3.11}, $X$ enjoys the Grothendieck property.\end{proof}

\begin{definition}
Let $X$ be a Banach space. We set $$R(X)=\sup_{(x_{n})_{n=1}^\infty \subseteq B_{X}}\inf_{(z_{n})_{n=1}^\infty \in \operatorname{cbs}((x_{n})_{n=1}^\infty )}\operatorname{ca}((z_{n})_{n=1}^\infty ).$$
\end{definition}

\begin{theorem}\label{3.13}
A Banach space $X$ is reflexive if and only if $R(X)=0$.
\end{theorem}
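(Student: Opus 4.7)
The proof will follow the same diagonal pattern used for property $(K)$ in Theorem~\ref{2.4}, with Lemma~\ref{3.3} playing the role that clauses \ref{K:b}--\ref{K:c} play there. Before starting the two implications, I would isolate an elementary monotonicity fact analogous to Lemma~\ref{2.6}: if $(y_{n})_{n=1}^\infty\in \operatorname{cbs}((x_{n})_{n=1}^\infty)$, then $\operatorname{ca}((y_{n})_{n=1}^\infty)\leqslant \operatorname{ca}((x_{n})_{n=1}^\infty)$. Indeed, if $y_n\in \operatorname{conv}(x_i)_{i=k_{n-1}+1}^{k_n}$, then for $n,m$ so large that $k_{n-1}, k_{m-1}\geqslant N$, the difference $y_n-y_m$ is a convex combination of the vectors $x_i-x_j$ with $i,j\geqslant N+1$, so $\|y_n-y_m\|\leqslant \sup_{i,j\geqslant N+1}\|x_i-x_j\|$; taking the infimum over $N$ gives the claim.

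For the necessary implication, assume $X$ is reflexive, so $B_X$ is weakly compact. Given any sequence $(x_n)_{n=1}^\infty\subseteq B_X$, Lemma~\ref{3.3} provides a convex block subsequence $(z_n)_{n=1}^\infty$ that is norm convergent, hence norm-Cauchy, so $\operatorname{ca}((z_n)_{n=1}^\infty)=0$. Thus $R(X)=0$.

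For the sufficient implication, assume $R(X)=0$ and fix a sequence $(x_n)_{n=1}^\infty\subseteq B_X$. Arguing inductively as in the proof of Theorem~\ref{2.4}, I would build a chain of convex block subsequences $(x^{(k)}_n)_{n=1}^\infty$ satisfying $(x^{(1)}_n)_{n=1}^\infty \in \operatorname{cbs}((x_n)_{n=1}^\infty)$, $(x^{(k+1)}_n)_{n=1}^\infty \in \operatorname{cbs}((x^{(k)}_n)_{n=1}^\infty)$, and $\operatorname{ca}((x^{(k)}_n)_{n=1}^\infty)<\tfrac{1}{k}$; the last condition is available because $R(X)=0$ and each $(x^{(k)}_n)_{n=1}^\infty$ lies in $B_X$. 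Set $z_n=x^{(n)}_n$. By Lemma~\ref{2.5}, $(z_n)_{n\geqslant k}\in \operatorname{cbs}((x^{(k)}_n)_{n=1}^\infty)$ for every $k$, so by the monotonicity fact above,
\[
\operatorname{ca}((z_n)_{n=1}^\infty)=\operatorname{ca}((z_n)_{n\geqslant k})\leqslant \operatorname{ca}((x^{(k)}_n)_{n=1}^\infty)<\tfrac{1}{k}\qquad (k\in\mathbb N).
\]
Hence $\operatorname{ca}((z_n)_{n=1}^\infty)=0$, that is, $(z_n)_{n=1}^\infty$ is norm-Cauchy and so norm convergent. Consequently every sequence in $B_X$ admits a convex block subsequence that is norm convergent, whence Lemma~\ref{3.3} yields that $B_X$ is relatively weakly compact. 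Since $B_X$ is norm-closed and convex, it is weakly closed by Mazur's theorem, so it is weakly compact and $X$ is reflexive.

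No step presents a real obstacle: the only mildly delicate point is the monotonicity of $\operatorname{ca}$ under passage to convex block subsequences, and that reduces to writing $y_n-y_m$ as a convex combination of the late differences $x_i-x_j$, as indicated above.
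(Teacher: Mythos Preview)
Your argument is correct, but it differs from the paper's. For the sufficiency, the paper does not diagonalise: it argues by contradiction, invoking a characterisation from \cite{BF} that a non-reflexive space contains a normalised basic sequence $(x_n)$ dominating the summing basis of $c_0$, and then observes that for any $(z_n)\in\operatorname{cbs}((x_n))$ one has $\|z_n-z_m\|\geqslant C$ for $n\neq m$, contradicting $R(X)=0$. Your route instead mirrors the proofs of Theorem~\ref{2.4} and of the unnumbered theorem following Proposition~\ref{3.11}: you build a chain of convex block subsequences with $\operatorname{ca}<1/k$, diagonalise, and feed the resulting norm-convergent convex block subsequence into Lemma~\ref{3.3}. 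The only ingredient not already stated in the paper is the monotonicity $\operatorname{ca}((y_n))\leqslant\operatorname{ca}((x_n))$ for $(y_n)\in\operatorname{cbs}((x_n))$, and your verification of that via $y_n-y_m=\sum_{i,j}\lambda_i\mu_j(x_i-x_j)$ is sound. Your approach is more self-contained (it avoids the external reference \cite{BF}) and keeps the paper's three ``$=0$ iff property'' theorems structurally parallel; the paper's approach is shorter once the summing-basis fact is granted.
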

\begin{proof}
The necessity follows from Lemma \ref{3.3}. To prove the sufficiency, we need \cite[Fact 1]{BF}: an operator $T$ from a Banach space $X$ to a Banach space $Y$ is weakly compact if and only if the image under $T$ of every normalised basic sequence in $X$ does not dominate the summing basis $(s_{n})_{n=1}^\infty$ of $c_{0}$. In particular, a Banach space $X$ is reflexive if and only if every normalised basic sequence in $X$ does not dominate the summing basis $(s_{n})_{n=1}^\infty$ of $c_{0}$.

Assume that $X$ is non-reflexive. Then there exists a normalised basic  sequence $(x_{n})_{n=1}^\infty $ in $X$ that dominates the summing basis $(s_{n})_{n=1}^\infty$ in $c_{0}$. That is, for some constant $C>0$, we get $$\|\sum_{i=1}^{n}a_{i}x_{i}\|\geqslant C\|\sum_{i=1}^{n}a_{i}s_{i}\|=C\max_{1\leqslant k\leqslant n}|\sum_{i=k}^{n}a_{i}|,$$ for all $n$ and all scalars $a_{1},a_{2},\ldots,a_{n}.$
By the hypothesis, there exists a sequence $(z_{n})_{n=1}^\infty $ in $\operatorname{cbs}((x_{n})_{n=1}^\infty )$,
$z_{n}=\sum\limits_{i=k_{n-1}+1}^{k_{n}}\lambda_{i}x_{i}$, so that
$\operatorname{ca}((z_{n})_{n=1}^\infty )<C/2$. Thus, for $n\neq m$ we have $\|z_{n}-z_{m}\|<\frac{1}{2}{C}$, yet
$$\|z_{n}-z_{m}\|=\|\sum\limits_{i=k_{n-1}+1}^{k_{n}}\lambda_{i}x_{i}-\sum\limits_{i=k_{m-1}+1}^{k_{m}}\lambda_{i}x_{i}\|\geqslant C
\|\sum\limits_{i=k_{n-1}+1}^{k_{n}}\lambda_{i}s_{i}-\sum\limits_{i=k_{m-1}+1}^{k_{m}}\lambda_{i}s_{i}\|\geqslant C.$$
This contradiction completes the proof.\end{proof}

We discuss the relationship between the quantity $R$ and several commonly used equivalent quantities measuring weak non-compactness.

\begin{theorem}\label{3.15}
Let $X$ be a Banach space. Then $$\operatorname{wck}_{X}(B_{X})\leqslant R(X).$$
\end{theorem}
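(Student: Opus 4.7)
The plan is to prove, for each $\varepsilon > 0$, each sequence $(x_n)_{n=1}^\infty \subseteq B_X$, and each weak$^*$-cluster point $x^{**} \in \operatorname{clust}_{X^{**}}((x_n)_{n=1}^\infty)$, that $d(x^{**},X) \leqslant R(X) + \varepsilon$; this yields $\operatorname{wck}_X(B_X) \leqslant R(X)$ upon taking suprema and letting $\varepsilon \to 0$. The core is to build a convex block subsequence $(z_n) \in \operatorname{cbs}((x_n)_{n=1}^\infty)$ that simultaneously satisfies (a) $\operatorname{ca}((z_n)) < R(X) + \varepsilon$ and (b) $z_n \to x^{**}$ in the weak$^*$ topology of $X^{**}$. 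Granting both, (a) produces an $N$ with $\|z_k - z_N\| < R(X) + \varepsilon$ for all $k \geqslant N$; letting $k \to \infty$ and combining (b) with the weak$^*$-lower-semicontinuity of the norm yields $\|x^{**} - z_N\| \leqslant R(X) + \varepsilon$, so $d(x^{**}, X) \leqslant R(X) + \varepsilon$ because $z_N \in X$.

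First I would reduce to a separable setting. Set $Y := \overline{\operatorname{span}}\{x_n \colon n \in \mathbb N\}$ and observe that, since each $x_n \in Y$, any $f \in Y^\perp \subseteq X^*$ satisfies $\langle f, x_n\rangle = 0$ for every $n$; the weak$^*$-cluster property then forces $\langle f, x^{**}\rangle = 0$, so $x^{**}$ factors through the canonical isometric embedding $Y^{**} \hookrightarrow X^{**}$ as some $\hat{x} \in Y^{**}$ with $\|x^{**} - y\|_{X^{**}} = \|\hat{x} - y\|_{Y^{**}}$ for every $y \in Y$. Since $B_Y \subseteq B_X$, one has $R(Y) \leqslant R(X)$, so the task reduces to producing $y \in Y$ with $\|\hat{x} - y\|_{Y^{**}} \leqslant R(Y) + \varepsilon$.

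The crux is then to combine Hahn--Banach with the defining property of $R$. As $\hat{x}$ is a weak$^*$-cluster of $(x_n)$ in $Y^{**}$, Hahn--Banach yields $\hat{x} \in \overline{\conv\{x_i \colon i \geqslant m\}}^{w^*}$ for every $m$, and from this one inductively builds a convex block subsequence $(y_k) \in \operatorname{cbs}((x_n)_{n=1}^\infty)$ with $y_k \to \hat{x}$ weak$^*$ in $Y^{**}$. Applying the definition of $R(Y)$ to $(y_k)$ produces $(z_n) \in \operatorname{cbs}((y_k)_{k=1}^\infty)$ with $\operatorname{ca}((z_n)) < R(Y) + \varepsilon \leqslant R(X) + \varepsilon$; by Lemma~\ref{2.5}, $(z_n) \in \operatorname{cbs}((x_n)_{n=1}^\infty)$, and being a convex block subsequence of the weak$^*$-convergent $(y_k)$, the sequence $(z_n)$ itself converges weak$^*$ to $\hat{x}$ (a routine computation shows convex combinations drawn from ever-later blocks of a weak$^*$-convergent sequence preserve the weak$^*$-limit). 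This gives the desired $(z_n)$ enjoying both (a) and (b).

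The hard part will be producing $(y_k)$ converging weak$^*$ to $\hat{x}$. When $Y^*$ is separable, $B_{Y^{**}}$ is weak$^*$-metrizable, cluster points agree with sequential limits, and one simply extracts a subsequence of $(x_n)$ that weak$^*$-converges to $\hat{x}$. In the general case, one must perform a sliding-hump Hahn--Banach construction: iteratively pick finite convex combinations $y_k$ of tail segments of $(x_n)$ that approximate $\hat{x}$ on a growing countable family of test functionals in $Y^*$, and then carefully argue (using the separability of $Y$, Goldstine's theorem, and a uniform boundedness / density argument) that the resulting sequence indeed converges weak$^*$ to $\hat{x}$ against every element of $Y^*$.
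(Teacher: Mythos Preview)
Your reduction to the separable subspace $Y=\overline{\operatorname{span}}\{x_n\}$ is fine, and so is the final lower-semicontinuity step. The genuine gap is the existence of a convex block subsequence $(y_k)\in\operatorname{cbs}((x_n))$ with $y_k\to\hat x$ in $\sigma(Y^{**},Y^*)$. This step can fail outright. Take $Y=\ell_1$ and $x_n=e_n$; then any $(y_k)\in\operatorname{cbs}((e_n))$ consists of disjointly supported norm-one vectors in $\ell_1$, and for $A:=\bigcup_{k\ \text{odd}}\operatorname{supp}y_k$ the functional $\mathbb{1}_A\in\ell_\infty=Y^*$ gives $\langle\mathbb{1}_A,y_k\rangle$ alternating between $1$ and $0$. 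Hence no convex block subsequence of $(e_n)$ is weak$^*$ convergent in $\ell_1^{**}$, although $(e_n)$ certainly has weak$^*$ cluster points there. Your ``general case'' sketch invokes separability of $Y$, but separability of $Y$ only makes $B_{Y^*}$ weak$^*$-metrisable, not $B_{Y^{**}}$; approximating $\hat x$ on a countable family of functionals cannot be upgraded to full weak$^*$ convergence when $Y^*$ is non-separable.

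The paper's argument avoids this obstruction by never asking for weak$^*$ convergence to $\hat x$. Instead it produces, via Hahn--Banach in $Y^{***}$ and Goldstine, a single weak$^*$-null sequence $(f_m)$ in $B_{Y^*}$ (this is where separability of $Y$ is actually used: $(B_{Y^*},w^*)$ is metrisable) with $\lim_m\langle \hat x,f_m\rangle$ close to $d(\hat x,Y)$. One then extracts a subsequence $(y_n)$ of $(x_n)$ satisfying $\lim_n\langle f_m,y_n\rangle=\langle \hat x,f_m\rangle$ for each fixed $m$, a countable family of conditions that survives passage to any convex block subsequence $(z_n)$. A direct estimate using that $(f_m)$ is weak$^*$-null then forces $\operatorname{ca}((z_n))\geqslant|\lim_m\lim_n\langle f_m,z_n\rangle|\approx d(\hat x,Y)$ for \emph{every} $(z_n)\in\operatorname{cbs}((y_n))$, giving $R(Y)\geqslant d(\hat x,Y)-\varepsilon$. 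The point is that the double-limit quantity is the right substitute for the unattainable weak$^*$ limit.
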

\begin{proof}
\emph{Case 1.} $X$ is separable.

Let $0<c<\operatorname{wck}_{X}(B_{X})$ be arbitrary. Then there exists a sequence $(x_{n})_{n=1}^\infty $ in $B_{X}$ so that $\textrm{d}(\textrm{clust}_{X^{**}}((x_{n})_{n=1}^\infty ),X)>c$.
Let $\varepsilon>0$. Take any $x^{**}_{0}\in \textrm{clust}_{X^{**}}((x_{n})_{n=1}^\infty )$ and let $d=\textrm{d}(x^{**}_{0},X)$. By the Hahn--Banach theorem, there exists $x^{***}_{0}\in S_{X^{***}}$ so that $\langle x^{***}_{0},x^{**}_{0}\rangle=d$ and $\langle x^{***}_{0},x\rangle=0$ for all $x\in X$. We let
$$C=B_{X^{*}}\cap \{x^{***}\in X^{***}\colon |\langle x^{***},x^{**}_{0}\rangle-d|<\varepsilon\}.$$ By Goldstine's theorem, $x^{***}_{0}\in \overline{C}^{\sigma(X^{***},X^{**})}$. Since $\langle x^{***}_{0},x\rangle=0$ for all $x\in X$, we get $0\in \overline{C}^{\sigma(X^{*},X)}$.
Since $X$ is separable, there exists a weak$^{*}$ null sequence $(f_{m})_{m=1}^\infty$ in $C$. By passing to a subsequence, we may assume that the limit $\lim\limits_{m}\langle x^{**}_{0},f_{m}\rangle$ exists, which is denoted by $a$. By the definition of $C$, $|a-d|\leqslant \varepsilon$.
Since $x^{**}_{0}\in \textrm{clust}_{X^{**}}((x_{n})_{n=1}^\infty )$, we get a subsequence $(y_{n})_{n=1}^\infty $ of $(x_{n})_{n=1}^\infty $ so that $|\langle x^{**}_{0}-y_{n},f_{m}\rangle|<\frac{1}{n}$ for $m=1,2,\ldots,n$. This implies that $\lim\limits_{n\to\infty}\langle f_{m},y_{n}\rangle=\langle x^{**}_{0},f_{m}\rangle$ for each $m$ and then $\lim\limits_{m\to\infty}\lim\limits_{n\to\infty}\langle f_{m},y_{n}\rangle=a$. Given any $(z_{n})_{n=1}^\infty \in \operatorname{cbs}((y_{n})_{n=1}^\infty )$. It is easy to see that $\lim\limits_{m\to\infty}\lim\limits_{n\to\infty}\langle f_{m},z_{n}\rangle=a$.

We \emph{claim} that $|a|\leqslant \operatorname{ca}((z_{n})_{n=1}^\infty )$. Indeed, for any $\delta>0$, we may choose a $N\in \mathbb N$ so that $\|z_{n}-z_{N}\|<\operatorname{ca}((z_{n})_{n=1}^\infty )+\delta$ for all $n\geqslant N$. Then for each $m$ and $n\geqslant N$, we get $$|\langle f_{m},z_{n}\rangle|\leqslant \operatorname{ca}((z_{n})_{n=1}^\infty )+\delta+|\langle f_{m},z_{N}\rangle|.$$
Since $(f_{m})_{m=1}^\infty$ is weak$^{*}$ null, we get, by letting $n\rightarrow \infty$ and $m\rightarrow \infty$, $|a|\leqslant \operatorname{ca}((z_{n})_{n=1}^\infty )+\delta.$ As $\delta$ was arbitrary, the proof of the claim is complete.

It follows that $$c<d\leqslant |a|+\varepsilon \leqslant R(X)+\varepsilon.$$ As $c$ and $\varepsilon$ are arbitrary, we get $\operatorname{wck}_{X}(B_{X})\leqslant R(X).$

\noindent \emph{Case 2}. $X$ is possibly non-separable.

Let $0<c<\operatorname{wck}_{X}(B_{X})$ be arbitrary. Then there exists a sequence $(x_{n})_{n=1}^\infty $ in $B_{X}$ so that $\textrm{d}(\textrm{clust}_{X^{**}}((x_{n})_{n=1}^\infty ),X)>c$.
Let $Y=\overline{\textrm{span}}\{x_{n}\colon n=1,2,\ldots\}$ and $i_{Y}\colon Y\rightarrow X$ be the inclusion map. Since $i_{Y}^{**}\colon Y^{**}\rightarrow X^{**}$ is an isometric embedding, we get $$\textrm{d}(\textrm{clust}_{Y^{**}}((x_{n})_{n=1}^\infty ),Y)\geqslant \textrm{d}(\textrm{clust}_{X^{**}}((x_{n})_{n=1}^\infty ),X)>c.$$
Indeed, let $y^{**}\in \textrm{clust}_{Y^{**}}((x_{n})_{n=1}^\infty )$ and $y\in Y$ be arbitrary. Then $i^{**}_{Y}y^{**}\in \textrm{clust}_{X^{**}}((x_{n})_{n=1}^\infty )$ and
$$\|y^{**}-y\|=\|i^{**}_{Y}y^{**}-y\|\geqslant \textrm{d}(\textrm{clust}_{X^{**}}((x_{n})_{n=1}^\infty ),X).$$
Finally, by Case 1, we get $$c\leqslant \operatorname{wck}_{Y}(B_{Y})\leqslant R(Y)\leqslant R(X).$$ As $c$ was arbitrary, the proof is complete.\end{proof}


\begin{example}${}$
\begin{enumerate}
\item\label{ex3:1} Let $X$ be a Banach space containing a subspace isomorphic to $\ell_1$. Then $R(X)=2$. In particular, $R(\ell_1)=R(C[0,1])=2$.
\item\label{ex3:2} $R(c)=2$, where $c$ denotes the space of all convergent scalar sequences equipped with the supremum norm.
\item\label{ex3:3} $1\leqslant R(c_{0})\leqslant \frac{4}{3}$.\end{enumerate}
\end{example}
\begin{proof}
\eqref{ex3:1}. Let $\varepsilon>0$. By James' distortion theorem, there is a sequence $(x_{n})_{n=1}^\infty $ in $B_{X}$ so that
$\|\sum\limits_{i=1}^{n}a_{i}x_{i}\|\geqslant (1-\varepsilon)\sum\limits_{i=1}^{n}|a_{i}|$ for all $n$ and all scalars $a_{1},a_{2},\ldots,a_{n}.$
For each $(z_{n})_{n=1}^\infty \in \operatorname{cbs}((x_{n})_{n=1}^\infty )$ we write $z_{n}=\sum_{i=k_{n-1}+1}^{k_{n}}\lambda_{i}x_{i}$. Then, for $n<m$, we get
$$\|z_{n}-z_{m}\|=\|\sum\limits_{i=k_{n-1}+1}^{k_{n}}\lambda_{i}x_{i}-\sum\limits_{i=k_{m-1}+1}^{k_{m}}\lambda_{i}x_{i}\|\geqslant 2(1-\varepsilon).$$
This implies that $\operatorname{ca}((z_{n})_{n=1}^\infty )\geqslant 2(1-\varepsilon)$ and hence $R(X)\geqslant 2(1-\varepsilon)$. As $\varepsilon$ was arbitrary, we proved \eqref{ex3:1}.

\eqref{ex3:2}. For each $n$, let
\[x_{n}(i)= \left\{ \begin{array}
                    {r@{\quad}l}

 1, & i\leqslant n \\ -1, & i>n
 \end{array} \right. \]
Given $(z_{n})_{n=1}^\infty \in \operatorname{cbs}((x_{n})_{n=1}^\infty )$, we write $z_{n}=\sum\limits_{i=k_{n-1}+1}^{k_{n}}\lambda_{i}x_{i}$. Then, for $n<m$
$$\sum\limits_{i=k_{n-1}+1}^{k_{n}}\lambda_{i}x_{i}(k_{m-1}+1)=-1, \quad \sum\limits_{i=k_{m-1}+1}^{k_{m}}\lambda_{i}x_{i}(k_{m-1}+1)=1.$$
This implies that $\|z_{n}-z_{m}\|=2$ and so $\operatorname{ca}((z_{n})_{n=1}^\infty )=2$. Thus, we obtain $R(c)=2$.

\eqref{ex3:3}. The inequality $R(c_{0})\geqslant 1$ follows from Theorem \ref{3.15} since for every non-reflexive space $X$ one has $\operatorname{wck}_{X}(B_{X})=1$, which follows for example from \cite[Theorem 1]{GHP} and \cite[Proposition 2.2]{CKS}. The inequality $R(c_{0})\leqslant 4/3$ was pointed out by W. B. Johnson; we present it here with his permission.

Suppose that $(x_{n})_{n=1}^\infty $ is a sequence in $B_{c_{0}}$. By passing to a subsequence, we may assume that $(x_{n})_{n=1}^\infty $ converges coordinate-wise to some $x\in B_{\ell_{\infty}}$. By passing to further subsequence and making a small perturbation we may assume that there are $k_{1}<k_{2}<\ldots$ so that $x_{n}$ is supported on $\{1,2,\ldots,k_{n}\}$ and $x_{n+1}(i)=x(i),i=1,2,\ldots,k_{n}$. We define $z_{n}=\frac{2}{3}x_{2n}+\frac{1}{3}x_{2n+1}$ ($n\in \mathbb N$).

We \emph{claim} that $\|z_{n}-z_{m}\|\leqslant \frac{4}{3}$ for all $n,m$, $m>n$. Indeed,

\[|z_{n}(i)-z_{m}(i)|=\left\{ \begin{array}
                    {r@{\quad\quad}l}

 |\frac{2}{3}x_{2n}(i)+\frac{1}{3}x(i)-\frac{2}{3}x_{2m}(i)-\frac{1}{3}x(i)|\leqslant \frac{4}{3}, & i\leqslant k_{2n} \\
 |\frac{1}{3}x_{2n+1}(i)-\frac{2}{3}x_{2m}(i)-\frac{1}{3}x(i)|\leqslant \frac{4}{3}, & k_{2n}<i\leqslant k_{2n+1} \\
 |-\frac{2}{3}x_{2m}(i)-\frac{1}{3}x(i)|\leqslant 1, & k_{2n+1}<i\leqslant k_{2m} \\
 |-\frac{1}{3}x_{2m+1}(i)|\leqslant \frac{1}{3}, & k_{2m}<i\leqslant k_{2m+1} \\
 \end{array} \right. \]
Consequently, $\operatorname{ca}((z_{n})_{n=1}^\infty )\leqslant \frac{4}{3}$ and the proof is completed. \end{proof}

We require an elementary lemma whose proof is straightforward.

\begin{lemma}\label{3.12}
Suppose that $(f_{n})_{n=1}^\infty$ is a weak$^{*}$ null sequence in $X^{*}$. Then
$$\limsup\limits_{n\to\infty}\|f_{n}\|\leqslant\operatorname{ca}((f_{n})_{n=1}^\infty )\leqslant 2\limsup\limits_{n\to\infty}\|f_{n}\|.$$
\end{lemma}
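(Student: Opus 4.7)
The plan is to prove the two inequalities separately. The upper bound is a one-line triangle-inequality argument; the lower bound is where the weak$^*$-null hypothesis enters.

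For the right-hand inequality, I would observe that for any $k,l\geqslant n$ we have $\|f_k-f_l\|\leqslant \|f_k\|+\|f_l\|\leqslant 2\sup_{j\geqslant n}\|f_j\|$. Taking the supremum over $k,l\geqslant n$ and then the infimum over $n$ yields
$$\operatorname{ca}((f_n)_{n=1}^\infty)=\inf_{n}\sup_{k,l\geqslant n}\|f_k-f_l\|\leqslant 2\inf_{n}\sup_{j\geqslant n}\|f_j\|=2\limsup_{n\to\infty}\|f_n\|.$$
No hypothesis beyond boundedness is used here.

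For the left-hand inequality, I would fix an arbitrary $c>\operatorname{ca}((f_n)_{n=1}^\infty)$, so that there is $N$ with $\|f_k-f_l\|<c$ for all $k,l\geqslant N$. For a fixed $k\geqslant N$ and a chosen $\varepsilon>0$, pick $x\in B_X$ with $\operatorname{Re}\langle f_k,x\rangle>\|f_k\|-\varepsilon$ (rotating $x$ by a unimodular scalar if the field is complex). Then for every $l\geqslant N$,
$$\operatorname{Re}\langle f_l,x\rangle>\operatorname{Re}\langle f_k,x\rangle-c>\|f_k\|-\varepsilon-c.$$
Because $(f_n)_{n=1}^\infty$ is weak$^*$-null, $\langle f_l,x\rangle\to 0$ as $l\to\infty$, and letting $l\to\infty$ in the displayed inequality gives $0\geqslant \|f_k\|-\varepsilon-c$, hence $\|f_k\|\leqslant c+\varepsilon$. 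As $\varepsilon$ was arbitrary, $\|f_k\|\leqslant c$ for every $k\geqslant N$, so $\limsup_{k\to\infty}\|f_k\|\leqslant c$. Since $c>\operatorname{ca}((f_n)_{n=1}^\infty)$ was arbitrary, the desired inequality follows.

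There is no real obstacle: the only conceptual point is to realize that the weak$^*$-null hypothesis is exactly what turns the $\operatorname{ca}$-bound on the differences $f_k-f_l$ into a bound on the norms $\|f_k\|$ themselves, by letting $l\to\infty$ in the pointwise evaluation against a norming vector $x$ for $f_k$. Everything else is triangle inequality.
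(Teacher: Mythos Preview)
Your proof is correct and is exactly the straightforward argument the paper has in mind; indeed, the paper omits the proof entirely, calling the lemma ``elementary'' with a ``straightforward'' proof. Both inequalities are handled in the natural way: the upper bound via the triangle inequality, and the lower bound by fixing a near-norming vector for $f_k$ and letting $l\to\infty$ using the weak$^{*}$-null hypothesis.
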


An immediate consequence of Lemma \ref{3.12} is the following quantification of implications ($\star$).

\begin{theorem}\label{3.4}
Let $X$ be a Banach space. Then $$K_{1}(X)\leqslant G(X)\leqslant R(X^{*}).$$
\end{theorem}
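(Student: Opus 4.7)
The plan is to establish the two inequalities separately; both will reduce to definitional monotonicity once the right observation is in place, with Lemma~\ref{3.12} doing the main work on the right-hand side.

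For the left inequality $K_{1}(X)\leqslant G(X)$, I would begin from the observation that $\mathcal{F}_{X}$ consists of weakly compact subsets of $B_{X}$, so for any $h\in X^{*}$ and any $K\in\mathcal{F}_{X}$,
\[
\sup_{x\in K}|\langle h,x\rangle|\leqslant \|h\|.
\]
Taking $\limsup_{n}$ on both sides with $h=g_{n}$ and then supremum over $K\in\mathcal{F}_{X}$ yields $\alpha((g_{n})_{n})\leqslant \limsup_{n}\|g_{n}\|$ for \emph{every} bounded sequence $(g_{n})_{n}$ in $X^{*}$, in particular for every convex block subsequence of a given weak$^{*}$ null $(f_{n})_{n}\subseteq B_{X^{*}}$. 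Passing to infima over $\operatorname{cbs}((f_{n})_{n})$ and then to suprema over weak$^{*}$ null sequences in $B_{X^{*}}$ gives $K_{1}(X)\leqslant G(X)$.

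For the right inequality $G(X)\leqslant R(X^{*})$, I would start with a weak$^{*}$ null sequence $(f_{n})_{n}\subseteq B_{X^{*}}$, regarding it merely as a sequence in the unit ball of the Banach space $X^{*}$. By the definition of $R(X^{*})$, for every $\varepsilon>0$ there is $(g_{n})_{n}\in\operatorname{cbs}((f_{n})_{n})$ with $\operatorname{ca}((g_{n})_{n})\leqslant R(X^{*})+\varepsilon$. Since convex combinations of tails of a weak$^{*}$ null sequence remain weak$^{*}$ null (each $\langle g_{n},x\rangle$ is a finite convex combination of terms $\langle f_{i},x\rangle$ with indices tending to $\infty$), the sequence $(g_{n})_{n}$ is weak$^{*}$ null in $X^{*}$. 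Lemma~\ref{3.12} then applies to give
\[
\limsup_{n\to\infty}\|g_{n}\|\leqslant \operatorname{ca}((g_{n})_{n})\leqslant R(X^{*})+\varepsilon.
\]
Taking infima over convex block subsequences, suprema over weak$^{*}$ null $(f_{n})_{n}\subseteq B_{X^{*}}$, and letting $\varepsilon\to 0$ yields $G(X)\leqslant R(X^{*})$.

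I do not anticipate any real obstacle: the argument is essentially a pair of one-line monotonicity chases, with the only mild technicality being the observation that convex block subsequences of weak$^{*}$ null sequences are themselves weak$^{*}$ null, which is standard. The key structural input is Lemma~\ref{3.12}, which converts the $\operatorname{ca}$-quantity controlled by $R(X^{*})$ into the $\limsup\|\cdot\|$-quantity defining $G(X)$ under the weak$^{*}$ null hypothesis automatically inherited from $(f_{n})_{n}$.
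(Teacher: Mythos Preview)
Your proposal is correct and matches the paper's approach: the paper states that Theorem~\ref{3.4} is an immediate consequence of Lemma~\ref{3.12}, and your argument simply unpacks this, using the trivial bound $\alpha((g_{n})_{n})\leqslant\limsup_{n}\|g_{n}\|$ for the first inequality and Lemma~\ref{3.12} (together with the fact that convex block subsequences of weak$^{*}$ null sequences are weak$^{*}$ null) for the second.
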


In order to quantify ($\star\star$), we require a lemma.

\begin{lemma}\label{3.14}
Let $X$ be a Banach space containing no subspaces isomorphic to $\ell_1$. Suppose that $f_{n}\stackrel{\operatorname{weak}^{*}}{\longrightarrow}0$ in $X^{*}$. Then
$$\limsup\limits_{n\to\infty}\|f_{n}\|\leqslant 2\beta\big((f_{n})_{n=1}^\infty\big).$$
\end{lemma}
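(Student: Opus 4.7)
The plan is to prove the inequality in the equivalent form: for every $c$ with $0 < c < \limsup_n \|f_n\|$, I show $\beta\big((f_n)_{n=1}^\infty\big) \geq c/2$. The strategy parallels that of Lemma \ref{2.1}, but here no weakly compact set is at hand from which to extract a weakly convergent subsequence; Rosenthal's $\ell_1$-theorem (applicable because $X$ contains no copy of $\ell_1$) steps in to supply a weak Cauchy subsequence, from which a weakly null sequence of half-differences can then be built.

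Fix such a $c$. After passing to a subsequence, I may assume $\|f_{n_k}\| > c$ for all $k$ and select $y_k \in B_X$ with $|\langle f_{n_k}, y_k \rangle| > c$. Rosenthal's theorem yields a weak Cauchy subsequence $(y_{k_j})_{j=1}^\infty$ of $(y_k)_{k=1}^\infty$; writing $h_j = f_{n_{k_j}}$ and $v_j = y_{k_j}$, the sequence $(h_j)_{j=1}^\infty$ is still weak$^*$ null and $|\langle h_j, v_j\rangle| > c$ for every $j$. Using that $\langle h_j, v\rangle \to 0$ for each fixed $v \in X$, I inductively choose indices $l_1 < l_2 < \cdots$ so that $|\langle h_{l_p}, v_{l_{p-1}}\rangle| < 1/p$ for $p \geq 2$, and set $x_p = \tfrac{1}{2}(v_{l_p} - v_{l_{p-1}})$. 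Then $\|x_p\| \leq 1$, and $(x_p)_{p \geq 2}$ is weakly null because $v_{l_p}$ and $v_{l_{p-1}}$ share a common weak$^*$-limit in $X^{**}$, making their difference pair to zero with every element of $X^*$. The estimate
$$|\langle h_{l_p}, x_p\rangle| \geq \tfrac{1}{2}\big(|\langle h_{l_p}, v_{l_p}\rangle| - |\langle h_{l_p}, v_{l_{p-1}}\rangle|\big) > \tfrac{1}{2}\big(c - \tfrac{1}{p}\big)$$
then delivers the required lower bound along the chosen subsequence.

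To finish, I match the indexing required by the definition of $\beta$ by extending $(x_p)_{p \geq 2}$ to a sequence $(\tilde x_n)_{n=1}^\infty \subseteq B_X$ via $\tilde x_n = x_p$ whenever $n = n_{k_{l_p}}$ and $\tilde x_n = 0$ otherwise. The sequence $(\tilde x_n)$ remains weakly null in $B_X$ (the inserted zeros do not disturb weak convergence), and $\limsup_n |\langle f_n, \tilde x_n\rangle| \geq \limsup_p |\langle h_{l_p}, x_p\rangle| \geq c/2$, giving $\beta\big((f_n)_{n=1}^\infty\big) \geq c/2$. I expect the main obstacle to be the diagonalization/index-matching step, which must simultaneously guarantee that the constructed weakly null sequence is defined on all positive integers (so that it qualifies as a test sequence for $\beta$) and that the pairing with $f_n$ at the matched indices remains quantitatively large; once the Rosenthal reduction is in place, this diagonal extraction together with the zero-extension handle both requirements cleanly.
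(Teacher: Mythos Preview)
Your proof is correct and follows essentially the same route as the paper's: both apply Rosenthal's $\ell_1$-theorem to the near-norming vectors to obtain a weakly Cauchy subsequence, form half-differences to produce a weakly null sequence in $B_X$, use the weak$^*$-nullity of $(f_n)$ to make the cross term small, and then zero-extend to match the indexing required by $\beta$. The only cosmetic differences are that you take consecutive differences $v_{l_p}-v_{l_{p-1}}$ with tolerance $1/p$, whereas the paper uses $x_{k_n}-x_{2n-1}$ with a fixed $\varepsilon>0$ sent to $0$ at the end.
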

\begin{proof}
Let $0<c<\limsup\limits_{n\to\infty}\|f_{n}\|$. By passing to a subsequence, we may assume that $\|f_{n}\|>c$ for all $n$. Choose $x_{n}\in B_{X}$ with $\langle f_{n},x_{n}\rangle>c$ ($n\in\mathbb N$). Passing to a further subsequence, by Rosenthal's $\ell_1$-theorem, we may assume that $(x_{n})_{n=1}^\infty $ is weakly Cauchy.
Let $\varepsilon>0$. Since $f_{n}\stackrel{\operatorname{weak}^{*}}{\longrightarrow}0$, we obtain, by induction, a strictly increasing sequence $(k_{n})_{n=1}^\infty$ of even integers so that $\langle f_{k_{n}},x_{k_{n}}-x_{2n-1}\rangle>c-\varepsilon$ for all $n$. We set $y_{n}=\frac{1}{2}(x_{k_{n}}-x_{2n-1})$. Then $(y_{n})_{n=1}^\infty $ is weakly null in $B_{X}$. Let us define a weakly null sequence $(z_{n})_{n=1}^\infty $ in $B_{X}$ by $z_{k_{n}}=y_{n}$ and $0$ otherwise. Then
$$\beta\big((f_{n})_{n=1}^\infty \big)\geqslant \limsup\limits_{n\to\infty}|\langle f_{n},z_{n}\rangle|\geqslant\limsup\limits_{n\to\infty}|\langle f_{k_{n}},z_{k_{n}}\rangle|
\geqslant \frac{c-\varepsilon}{2}.$$
Letting $\varepsilon\rightarrow 0$, we get $\beta\big((f_{n})_{n=1}^\infty \big)\geqslant \frac{c}{2}.$ As $c$ was arbitrary, the proof is complete.\end{proof}

\begin{theorem}
Let $X$ be a Banach space containing no subspaces isomorphic to $\ell_1$. Then $$K_{1}(X)\leqslant G(X)\leqslant R(X^{*})\leqslant 8K_{2}(X).$$
\end{theorem}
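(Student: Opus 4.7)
The plan is to observe that the first two inequalities $K_{1}(X)\leqslant G(X)\leqslant R(X^{*})$ are already supplied by Theorem \ref{3.4} without any restriction on $X$, so the only new content is the bound $R(X^{*})\leqslant 8K_{2}(X)$ under the hypothesis $\ell_{1}\not\hookrightarrow X$. Accordingly, I would fix an arbitrary sequence $(f_{n})_{n=1}^\infty\subseteq B_{X^{*}}$ and try to manufacture a convex block subsequence whose $\operatorname{ca}$-value is at most $8K_{2}(X)+\varepsilon$ for every prescribed $\varepsilon>0$.

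The first step, and the one I expect to be the main obstacle, is to pass to a weak$^{*}$ convergent subsequence of $(f_{n})_{n=1}^\infty$. This is where the hypothesis $\ell_{1}\not\hookrightarrow X$ really bites: by a classical consequence of Rosenthal's $\ell_{1}$-theorem, bounded sequences in $X^{*}$ then always admit weak$^{*}$ convergent subsequences (in the non-separable case one reduces to a suitable separable subspace of $X$ in the usual way, observing that weak$^{*}$-Cauchyness on a dense subset, together with boundedness, already yields weak$^{*}$ convergence in all of $X^{*}$). Let $f\in B_{X^{*}}$ denote the resulting weak$^{*}$ limit. The sequence $\tfrac{1}{2}(f_{n}-f)$ then lies in $B_{X^{*}}$ and is weak$^{*}$ null, so the definition of $K_{2}(X)$ supplies, for every $\varepsilon>0$, some $(h_{n})_{n=1}^\infty\in\operatorname{cbs}\bigl((\tfrac{1}{2}(f_{n}-f))_{n=1}^\infty\bigr)$ with $\beta\bigl((h_{n})_{n=1}^\infty\bigr)<K_{2}(X)+\varepsilon$. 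Writing $h_{n}=\tfrac{1}{2}(g_{n}-f)$ automatically presents $(g_{n})_{n=1}^\infty$ as a convex block subsequence of the original $(f_{n})_{n=1}^\infty$.

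To finish, I would apply Lemma \ref{3.14} to the weak$^{*}$ null sequence $(h_{n})_{n=1}^\infty$ — this is the second, and crucial, use of the hypothesis $\ell_{1}\not\hookrightarrow X$ — to obtain
$$\limsup_{n\to\infty}\|h_{n}\|\leqslant 2\beta\bigl((h_{n})_{n=1}^\infty\bigr)<2K_{2}(X)+2\varepsilon,$$
whence $\limsup_{n}\|g_{n}-f\|<4K_{2}(X)+4\varepsilon$. The triangle inequality $\|g_{k}-g_{l}\|\leqslant \|g_{k}-f\|+\|g_{l}-f\|$ applied uniformly in $k,l$ beyond some index then forces
$$\operatorname{ca}\bigl((g_{n})_{n=1}^\infty\bigr)\leqslant 2\limsup_{n\to\infty}\|g_{n}-f\|<8K_{2}(X)+8\varepsilon.$$
Taking the infimum over convex block subsequences of $(f_{n})_{n=1}^\infty$, then the supremum over $(f_{n})_{n=1}^\infty\subseteq B_{X^{*}}$, and finally letting $\varepsilon\to 0$ delivers $R(X^{*})\leqslant 8K_{2}(X)$, as required. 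Beyond the weak$^{*}$-sequential compactness input, the argument is a short concatenation of the definition of $K_{2}$, Lemma \ref{3.14}, and the triangle inequality — no further machinery is needed.
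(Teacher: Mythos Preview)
Your overall architecture matches the paper's exactly: reduce to a weak$^{*}$ null sequence by subtracting the limit and rescaling, then chain Lemma \ref{3.14} with the elementary estimate $\operatorname{ca}\leqslant 2\limsup\|\cdot\|$ to convert a $\beta$-bound into a $\operatorname{ca}$-bound. The paper runs the same chain contrapositively (starting from $c<R(X^{*})$ and ending with $K_{2}(X)\geqslant c/8$), but the content is identical.

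There is, however, a genuine gap at the very first step. You assert that $\ell_{1}\not\hookrightarrow X$ forces $B_{X^{*}}$ to be weak$^{*}$ \emph{sequentially} compact, and you justify this by ``reducing to a suitable separable subspace in the usual way''. That reduction does not work: weak$^{*}$ convergence in $X^{*}$ means pointwise convergence on \emph{all} of $X$, and for non-separable $X$ there is no countable dense set to run a diagonal argument against. Nor is this a ``classical consequence of Rosenthal's theorem''---Rosenthal's theorem applied in $X^{*}$ would require $\ell_{1}\not\hookrightarrow X^{*}$, which is not what you are assuming. What \emph{is} available under the hypothesis $\ell_{1}\not\hookrightarrow X$ is Bourgain's theorem that $B_{X^{*}}$ is weak$^{*}$ \emph{convex block} compact: every bounded sequence in $X^{*}$ admits a weak$^{*}$ convergent convex block subsequence (see \cite[Proposition 3.11]{Bo} or \cite[Proposition 11]{Pf}). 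This is precisely what the paper invokes, and it is all you need: a convex block subsequence of a convex block subsequence is still a convex block subsequence (Lemma \ref{2.5}), so the remainder of your argument survives verbatim once you replace ``subsequence'' by ``convex block subsequence'' at that step.
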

\begin{proof}
By Theorem \ref{3.4}, it suffices to prove the inequality $R(X^{*})\leqslant 8K_{2}(X).$

Let $0<c<R(X^{*})$. Then there exists a sequence $(f_{n})_{n=1}^\infty$ in $B_{X^{*}}$ so that
$$\operatorname{ca}\big((g_{n})_{n=1}^\infty\big)>c \qquad \Big( (g_{n})_{n=1}^\infty\in \operatorname{cbs}\big((f_{n})_{n=1}^\infty \big)\Big).$$
Since $X$ contains no isomorphic copy of $\ell_1$, it follows from \cite[Proposition 3.11]{Bo} (\emph{cf}. \cite[Proposition 11]{Pf}) that $B_{X^{*}}$ is \emph{weak$^{*}$ convex block compact}, that is, every sequence in $B_{X^{*}}$ admits a weak$^{*}$ convergent convex block subsequence. By passing to a convex block subsequence, by Lemma \ref{2.5} we may assume that $f_{n}\stackrel{\operatorname{weak}^{*}}{\longrightarrow}f$ for some $f\in B_{X^{*}}$. Hence, we get
$$\operatorname{ca}((g_{n})_{n=1}^\infty)>c \qquad \Big((g_{n})_{n=1}^\infty\in \operatorname{cbs}\big((f_{n}-f)_{n=1}^\infty\big)\Big).$$
Rescaling if necessary, we may assume that $(f_{n})_{n=1}^\infty$ is a weak$^{*}$ null sequence in $B_{X^{*}}$ and
$$\operatorname{ca}\big((g_{n})_{n=1}^\infty\big)>\frac{c}{2} \qquad \Big((g_{n})_{n=1}^\infty\in \operatorname{cbs}\big((f_{n})_{n=1}^\infty \big)\Big).$$
By Lemma \ref{3.12} and Lemma \ref{3.14}, we arrive at
$$\frac{c}{2}<\operatorname{ca}\big((g_{n})_{n=1}^\infty\big)\leqslant 2\limsup\limits_{n}\|g_{n}\|\leqslant 4\beta\big((g_{n})_{n=1}^\infty\big) \qquad \Big( (g_{n})_{n=1}^\infty\in \operatorname{cbs}\big((f_{n})_{n=1}^\infty \big)\Big).$$ This implies that $K_{2}(X)\geqslant \frac{c}{8}$. Since $c$ was arbitrary, the proof is complete. \end{proof}

\section{Quantifying property ($\mu^{s}$)}\label{sect:mus}

 For a bounded sequence $(f_{n})_{n=1}^\infty$ in $X^{*}$, we define $$c\alpha\big((f_{n})_{n=1}^\infty \big)=\alpha\big((\frac{1}{n}\sum_{i=1}^{n}f_{i})_{n=1}^\infty\big).$$
Then $c\alpha((f_{n})_{n=1}^\infty )=0$ if and only if $(f_{n})_{n=1}^\infty$ is Ces\`{a}ro convergent to $0$ with respect to $\mu(X^{*},X)$. A direct argument shows that $
c\alpha((f_{n})_{n=1}^\infty )=c\alpha((f_{n})_{n\geqslant N+1})$ for every positive integer $N$.

\begin{definition}
Let $X$ be a Banach space. We set
$$\mu^{s}(X)=\sup_{(f_{n})_{n=1}^\infty \subseteq B_{X^{*}} \atop \operatorname{weak}^{*}\textrm{null}}\inf_{(g_{n})_{n=1}^\infty\in \textrm{ss}((f_{n})_{n=1}^\infty )}\sup_{(h_{n})_{n=1}^\infty \in
\textrm{ss}((g_{n})_{n=1}^\infty)}c\alpha\big((h_{n})_{n=1}^\infty\big).$$
\end{definition}

\begin{theorem}\label{3.8}
A Banach space $X$ has property $(\mu^{s})$ if and only if $\mu^{s}(X)=0$.
\end{theorem}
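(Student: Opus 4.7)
The plan is to mirror the diagonal-subsequence strategy used in the proof of Theorem \ref{2.4}, replacing $\alpha$ and convex block subsequences by $c\alpha$ and genuine subsequences, and exploiting the shift-invariance $c\alpha((f_{n})_{n=1}^\infty)=c\alpha((f_{n})_{n\geqslant N+1})$ noted right after the definition of $c\alpha$.

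Necessity is immediate from the definitions. If $X$ has property $(\mu^{s})$ and $(f_{n})_{n=1}^\infty\subseteq B_{X^{*}}$ is weak$^{*}$ null, pick $(g_{n})_{n=1}^\infty\in \textrm{ss}((f_{n})_{n=1}^\infty)$ so that every $(h_{n})_{n=1}^\infty\in \textrm{ss}((g_{n})_{n=1}^\infty)$ is $\mu(X^{*},X)$-Cesàro null, i.e.\ $c\alpha((h_{n})_{n=1}^\infty)=0$. Then the infimum in the definition of $\mu^{s}(X)$ evaluates to $0$ for this $(f_{n})$, hence $\mu^{s}(X)=0$.

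For sufficiency, let $(f_{n})_{n=1}^\infty$ be weak$^{*}$ null in $X^{*}$. By the uniform boundedness principle $\sup_{n}\|f_{n}\|<\infty$, and since $c\alpha$ is positively homogeneous we may rescale and assume $(f_{n})_{n=1}^\infty\subseteq B_{X^{*}}$. Using $\mu^{s}(X)=0$, I would construct recursively a nested family of subsequences $(f^{(k)}_{n})_{n=1}^\infty$ such that
\begin{itemize}
\item $(f^{(1)}_{n})_{n=1}^\infty\in \textrm{ss}((f_{n})_{n=1}^\infty)$,
\item $(f^{(k+1)}_{n})_{n=1}^\infty\in \textrm{ss}((f^{(k)}_{n})_{n=1}^\infty)$,
\item $\sup\bigl\{c\alpha((h_{n})_{n=1}^\infty)\colon (h_{n})_{n=1}^\infty\in \textrm{ss}((f^{(k)}_{n})_{n=1}^\infty)\bigr\}<\tfrac{1}{k}$.
\end{itemize}
The recursion is legitimate: each $(f^{(k)}_{n})$ is a weak$^{*}$ null sequence in $B_{X^{*}}$, so the hypothesis $\mu^{s}(X)=0$ furnishes a subsequence realising the third bound with $\tfrac{1}{k+1}$ in place of $\tfrac{1}{k}$.

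Finally I diagonalise, setting $g_{n}=f^{(n)}_{n}$. A routine check (analogous to the one in the proof of Theorem \ref{2.4}, via Lemma \ref{2.5}-style book-keeping but for subsequences) shows $(g_{n})_{n\geqslant k}\in \textrm{ss}((f^{(k)}_{n})_{n=1}^\infty)$ for every $k$. Take any $(h_{n})_{n=1}^\infty\in \textrm{ss}((g_{n})_{n=1}^\infty)$ and any $k\in\mathbb N$. All but finitely many terms of $(h_{n})$ lie in $(g_{n})_{n\geqslant k}$, so some tail $(h_{n})_{n\geqslant N+1}$ is a subsequence of $(f^{(k)}_{n})_{n=1}^\infty$, giving $c\alpha((h_{n})_{n\geqslant N+1})<\tfrac{1}{k}$. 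By shift-invariance $c\alpha((h_{n})_{n=1}^\infty)=c\alpha((h_{n})_{n\geqslant N+1})<\tfrac{1}{k}$ for every $k$, hence $c\alpha((h_{n})_{n=1}^\infty)=0$. Thus $(g_{n})_{n=1}^\infty$ is the desired subsequence witnessing property $(\mu^{s})$ for $(f_{n})_{n=1}^\infty$. The only non-routine ingredient is the shift-invariance of $c\alpha$, which is exactly what allows tail control (from the diagonal) to be upgraded to full control; once this is in hand the argument is a verbatim adaptation of the proof of Theorem \ref{2.4}.
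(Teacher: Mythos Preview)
Your proof is correct and follows essentially the same route as the paper's: an inductive construction of nested subsequences with $\sup_{(h_n)\in\mathrm{ss}}c\alpha((h_n))<\tfrac{1}{k}$, a diagonal extraction $g_n=f_n^{(n)}$, and the shift-invariance of $c\alpha$ to pass from tail control to full control. Your explicit rescaling step (via uniform boundedness) to reduce to sequences in $B_{X^{*}}$ is a small but worthwhile addition that the paper glosses over.
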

\begin{proof}
The sufficient part is trivial. We only prove the necessary part.

Given a weak$^{*}$ null sequence $(f_{n})_{n=1}^\infty$ in $B_{X^{*}}$, by induction, for each $k$ we may find a~sequence $((g_{n})^{(k)})_{n=1}^\infty$ in $X^{*}$ such that
\begin{itemize}
    \item $((g_{n})^{(1)})_{n=1}^\infty\in \textrm{ss}((f_{n})_{n=1}^\infty ),$
    \item $((g_{n})^{(k+1)})_{n}\in \textrm{ss}(((g_{n})^{(k)})_{n}),$
    \item $c\alpha\big((g_{n})_{n=1}^\infty\big)<\frac{1}{k} \quad \big((g_{n})_{n=1}^\infty \in \textrm{ss}(((g_{n})^{(k)})_{n})\big).$
\end{itemize}

Let $g_{n}=(g_{n})^{(n)}$ $(n=1,2,\ldots)$. Then $(g_{n})_{n=1}^\infty$ is a subsequence of $(f_{n})_{n=1}^\infty$. Take any subsequence $(h_{n})_{n=1}^\infty $ of
$(g_{n})_{n=1}^\infty$. By construction, for each $k$, there exists $N_{k}\in\mathbb N$ so that $(h_{n})_{n\geqslant N_{k}+1}\in \textrm{ss}(((g_{n})^{(k)})_{n}).$
Consequently, $$c\alpha\big((h_{n})_{n=1}^\infty\big)=c\alpha((h_{n})_{n\geqslant N_{k}+1})<\frac{1}{k}.$$
As $k$ was arbitrary, $c\alpha\big((h_{n})_{n=1}^\infty\big)=0$. Thus the sequence $(h_{n})_{n=1}^\infty $ is Ces\`{a}ro convergent to $0$ with respect to $\mu(X^{*},X)$, which completes the proof. \end{proof}

\begin{definition}For a Banach space $X$, we set
$$\operatorname{bs}(X)=\sup_{(x_{n})_{n=1}^\infty \subseteq B_{X}}\inf_{(y_{n})_{n=1}^\infty \in \textrm{ss}((x_{n})_{n=1}^\infty )}\sup_{(z_{n})_{n=1}^\infty \in \textrm{ss}((y_{n})_{n=1}^\infty )}
\textrm{cca}((z_{n})_{n=1}^\infty ).$$\end{definition}

Clearly, $\operatorname{bs}(B_{X})\leqslant \operatorname{bs}(X)$. Combining Theorem \ref{1.1} with \cite[Corollary 4.3]{BKS}, we see that $\operatorname{bs}(X)=0$ if and only if $X$ has the Banach--Saks property.

\begin{theorem}
Let $X$ be a Banach space. Then $$\frac{1}{3}K_{1}(X)\leqslant \mu^{s}(X)\leqslant \operatorname{bs}(X^{*}).$$
\end{theorem}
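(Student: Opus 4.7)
The plan is to prove each inequality separately.

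For the right inequality $\mu^{s}(X) \leqslant \operatorname{bs}(X^{*})$, the key observation is a pointwise estimate
$$c\alpha\big((h_{n})_{n=1}^\infty\big) \leqslant \textrm{cca}\big((h_{n})_{n=1}^\infty\big)$$
valid for every weak$^{*}$ null sequence $(h_{n})_{n=1}^\infty$ in $B_{X^{*}}$. Indeed, the Cesàro averages $b_{n} := \frac{1}{n}\sum_{i=1}^{n} h_{i}$ are convex combinations of weak$^{*}$ null terms and so are themselves weak$^{*}$ null, hence Lemma~\ref{3.12} yields $\limsup_{n}\|b_{n}\| \leqslant \operatorname{ca}\big((b_{n})_{n=1}^\infty\big) = \textrm{cca}\big((h_{n})_{n=1}^\infty\big)$; combining this with the trivial bound $c\alpha\big((h_{n})_{n=1}^\infty\big) = \alpha\big((b_{n})_{n=1}^\infty\big) \leqslant \limsup_{n}\|b_{n}\|$ (which holds because members of $\mathcal{F}_{X}$ are contained in $B_{X}$) proves the claim. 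Substituting this into the $\sup$--$\inf$--$\sup$ in the definition of $\mu^{s}(X)$---and observing that every subsequence of a weak$^{*}$ null sequence remains weak$^{*}$ null, while the outer supremum in $\operatorname{bs}(X^{*})$ ranges over all bounded sequences---gives the desired bound.

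For the left inequality, I will prove the sharper bound $K_{1}(X) \leqslant \mu^{s}(X)$, which trivially implies $\frac{1}{3}K_{1}(X) \leqslant \mu^{s}(X)$. Fix $\varepsilon > 0$ and a weak$^{*}$ null sequence $(f_{n})_{n=1}^\infty \subseteq B_{X^{*}}$. By the definition of $\mu^{s}(X)$, pick $(g_{n})_{n=1}^\infty \in \textrm{ss}((f_{n})_{n=1}^\infty)$ so that $c\alpha\big((h_{n})_{n=1}^\infty\big) \leqslant \mu^{s}(X) + \varepsilon$ for every $(h_{n})_{n=1}^\infty \in \textrm{ss}((g_{n})_{n=1}^\infty)$. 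Choose $s_{0} := 0$ and a strictly increasing sequence $(s_{k})_{k=1}^\infty$ of positive integers with $s_{k-1}/s_{k} \to 0$ (take for instance $s_{k} := k!$), put $n_{k} := s_{k} - s_{k-1}$, and define
$$v_{k} := \frac{1}{n_{k}}\sum_{i=s_{k-1}+1}^{s_{k}} g_{i} \qquad (k \in \mathbb N),$$
which is a convex block subsequence of $(g_{n})_{n=1}^\infty$, hence of $(f_{n})_{n=1}^\infty$.

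Write $b_{n} := \frac{1}{n}\sum_{j=1}^{n} g_{j}$. The $s_{k}$-th Cesàro average satisfies $b_{s_{k}} = \frac{1}{s_{k}}\sum_{l=1}^{k} n_{l} v_{l}$, whence $v_{k} - b_{s_{k}} = \frac{s_{k-1}}{s_{k}} v_{k} - \frac{1}{s_{k}}\sum_{l=1}^{k-1} n_{l} v_{l}$, so $\|v_{k} - b_{s_{k}}\| \leqslant 2s_{k-1}/s_{k} \to 0$ (using $\|v_{l}\| \leqslant 1$). Consequently, for every $K \in \mathcal{F}_{X}$,
$$\limsup_{k\to\infty} \sup_{x \in K}|\langle v_{k},x\rangle| = \limsup_{k\to\infty} \sup_{x \in K}|\langle b_{s_{k}},x\rangle| \leqslant \limsup_{n\to\infty} \sup_{x \in K}|\langle b_{n},x\rangle| \leqslant \alpha\big((b_{n})_{n=1}^\infty\big).$$
Taking $\sup_{K \in \mathcal{F}_{X}}$ yields $\alpha\big((v_{k})_{k=1}^\infty\big) \leqslant \alpha\big((b_{n})_{n=1}^\infty\big) = c\alpha\big((g_{n})_{n=1}^\infty\big) \leqslant \mu^{s}(X) + \varepsilon$. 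As $(f_{n})$ and $\varepsilon$ are arbitrary, $K_{1}(X) \leqslant \mu^{s}(X)$.

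The principal technical point is engineering the block sizes so that $v_{k}$ becomes a norm approximation of a genuine Cesàro average of $(g_{n})$; this is precisely what converts a $c\alpha$-estimate (a subsequence statement) into an $\alpha$-estimate for a convex block subsequence. The remaining steps are routine passages through the definitions, and the sharper constant $1$ rather than $3$ is a byproduct of the tightness of this approximation.
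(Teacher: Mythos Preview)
Your proof is correct, and for the left inequality it is genuinely different from---and sharper than---the paper's argument.

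For the right inequality $\mu^{s}(X)\leqslant\operatorname{bs}(X^{*})$, both you and the paper reduce to the pointwise bound $c\alpha((h_{n}))\leqslant\textrm{cca}((h_{n}))$ for weak$^{*}$ null $(h_{n})$; the paper invokes Lemma~\ref{2.3} (giving $\alpha\leqslant\operatorname{ca}_{\rho^{*}}\leqslant\operatorname{ca}$ for the Ces\`aro averages), while you go through Lemma~\ref{3.12} and the trivial bound $\alpha\leqslant\limsup\|\cdot\|$. These are essentially the same.

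For the left inequality, the paper fixes a weak$^{*}$ null $(f_{n})$ witnessing $K_{1}(X)>c$ and a subsequence $(g_{n})$ with $c\alpha((g_{n}))<\mu^{s}(X)+\varepsilon$, then considers the dyadic blocks $\frac{1}{2^{n-1}}\sum_{i=2^{n-1}+1}^{2^{n}}g_{i}$ as a convex block subsequence and writes each one as $2\cdot\frac{1}{2^{n}}\sum_{i=1}^{2^{n}}g_{i}-\frac{1}{2^{n-1}}\sum_{i=1}^{2^{n-1}}g_{i}$; subadditivity of $\alpha$ then gives $c<3\,c\alpha((g_{n}))$, whence the factor $\tfrac{1}{3}$. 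Your approach instead takes blocks of superexponentially growing length ($s_{k}=k!$), so that each block average $v_{k}$ is \emph{norm}-asymptotic to the full Ces\`aro mean $b_{s_{k}}$; this eliminates the loss and yields the sharper inequality $K_{1}(X)\leqslant\mu^{s}(X)$. The trade-off is minimal: the paper's identity is purely algebraic but costs a constant, whereas your argument needs the easy norm estimate $\|v_{k}-b_{s_{k}}\|\leqslant 2s_{k-1}/s_{k}\to 0$ and recovers the optimal constant.
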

\begin{proof}
The latter inequality follows from Lemma \ref{2.3}, so it remains to prove only the former one.

Let $0<c<K_{1}(X)$ and let $\varepsilon>0$. Then there exist a weak$^{*}$ null sequence $(f_{n})_{n=1}^\infty$ in $B_{X^{*}}$ and a subsequence $(g_{n})_{n=1}^\infty$ of $(f_{n})_{n=1}^\infty$ such that
\begin{itemize}
    \item $\alpha\big((h_{n})_{n=1}^\infty\big)>c \quad \big((h_{n})_{n=1}^\infty \in \operatorname{cbs}((f_{n})_{n=1}^\infty )\big),$
    \item $c\alpha((g_{n})_{n=1}^\infty)<\mu^{s}(X)+\varepsilon.$
\end{itemize}
It follows from Lemma \ref{2.5} that
\begin{align*}
c&<\alpha\big((\frac{1}{2^{n-1}}\sum_{i=2^{n-1}+1}^{2^{n}}g_{i})_{n=1}^\infty\big)\\
&\leqslant 2\alpha\big((\frac{1}{2^{n}}\sum_{i=1}^{2^{n}}g_{i})_{n=1}^\infty\big)+\alpha\big((\frac{1}{2^{n-1}}\sum_{i=1}^{2^{n-1}}g_{i})_{n=1}^\infty\big)\\
&\leqslant 3\alpha\big((\frac{1}{n}\sum_{i=1}^{n}g_{i})_{n=1}^\infty\big)\\
&\leqslant 3\mu^{s}(X)+3\varepsilon.
\end{align*}
As $c$ and $\varepsilon$ are arbitrary, we arrive at $K_{1}(X)\leqslant 3\mu^{s}(X)$, which completes the proof. \end{proof}

Finally, we present a result that directly quantifies  \cite[Proposition 2.2]{Ro}.

\begin{theorem}
Let $X$ be a reflexive space. Then $$\mu^{s}(X)\leqslant \operatorname{bs}(X^{*})\leqslant 4\mu^{s}(X).$$
\end{theorem}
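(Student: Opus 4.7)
The inequality $\mu^{s}(X)\leqslant \operatorname{bs}(X^{*})$ is just the second assertion of the preceding theorem, so the task is to establish $\operatorname{bs}(X^{*})\leqslant 4\mu^{s}(X)$ under the hypothesis that $X$ is reflexive. The plan is to reduce an arbitrary bounded sequence in $B_{X^{*}}$ to a weak$^{*}$ null one (exploiting reflexivity of $X^{*}$), apply the definition of $\mu^{s}(X)$ to this weak$^{*}$ null sequence, and then translate the bound on the Cesàro averages of the centred sequence back into a bound on the Cesàro averages of the original sequence, paying a factor of $2$ along the way; a second factor of $2$ enters when upgrading a $\|\cdot\|$-limsup bound on a weak$^{*}$ null sequence to a $\operatorname{ca}$-bound via Lemma~\ref{3.12}.

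The crucial observation is that when $X$ is reflexive, $B_{X}$ itself is weakly compact, so $B_{X}\in\mathcal{F}_{X}$; consequently $\alpha((f_{n})_{n=1}^{\infty})=\limsup_{n}\|f_{n}\|$ for every bounded sequence $(f_{n})_{n=1}^\infty$ in $X^{*}$, and therefore $c\alpha((h_{n})_{n=1}^\infty)=\limsup_{n}\|\tfrac{1}{n}\sum_{i=1}^{n}h_{i}\|$. Moreover, $X^{*}$ is reflexive, so weak$^{*}$ and weak topologies on $X^{*}$ coincide and every bounded sequence in $B_{X^{*}}$ admits a weak$^{*}$-convergent subsequence.

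Concretely, fix $\varepsilon>0$ and a sequence $(F_{n})_{n=1}^\infty\subseteq B_{X^{*}}$. First, pass to a subsequence $(F_{n_{k}})_{k=1}^\infty$ weak$^{*}$-convergent to some $f\in B_{X^{*}}$, and set $f_{k}=\tfrac{1}{2}(F_{n_{k}}-f)\in B_{X^{*}}$; this sequence is weak$^{*}$ null. By the very definition of $\mu^{s}(X)$, there exists a subsequence $(g_{n})_{n=1}^\infty$ of $(f_{k})_{k=1}^\infty$ such that every further subsequence $(h_{n})_{n=1}^\infty$ satisfies $c\alpha((h_{n})_{n=1}^\infty)<\mu^{s}(X)+\varepsilon$. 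Let $(G_{n})_{n=1}^\infty$ be the corresponding subsequence of $(F_{n})_{n=1}^\infty$, so that $g_{n}=\tfrac{1}{2}(G_{n}-f)$. For any subsequence $(H_{n})_{n=1}^\infty$ of $(G_{n})_{n=1}^\infty$, write $h_{n}=\tfrac{1}{2}(H_{n}-f)$; then $\tfrac{1}{n}\sum_{i=1}^{n}H_{i}=2\cdot\tfrac{1}{n}\sum_{i=1}^{n}h_{i}+f$, so that $\operatorname{cca}((H_{n})_{n=1}^\infty)=2\operatorname{cca}((h_{n})_{n=1}^\infty)$.

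Finally, since $(h_{n})_{n=1}^\infty$ is weak$^{*}$ null in $B_{X^{*}}$, so is its Cesàro-average sequence $(\tfrac{1}{n}\sum_{i=1}^{n}h_{i})_{n=1}^\infty$; applying Lemma~\ref{3.12} to this sequence and using the reflexive-case identity $c\alpha((h_{n})_{n=1}^\infty)=\limsup_{n}\|\tfrac{1}{n}\sum_{i=1}^{n}h_{i}\|$ gives
$$\operatorname{cca}((h_{n})_{n=1}^\infty)=\operatorname{ca}\bigl((\tfrac{1}{n}\sum_{i=1}^{n}h_{i})_{n=1}^\infty\bigr)\leqslant 2\limsup_{n}\|\tfrac{1}{n}\sum_{i=1}^{n}h_{i}\|=2c\alpha((h_{n})_{n=1}^\infty)<2\bigl(\mu^{s}(X)+\varepsilon\bigr).$$
Hence $\operatorname{cca}((H_{n})_{n=1}^\infty)<4(\mu^{s}(X)+\varepsilon)$ for every further subsequence $(H_{n})_{n=1}^\infty$ of $(G_{n})_{n=1}^\infty$. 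Taking the supremum over such $(H_{n})_{n=1}^\infty$, then the infimum over subsequences of $(F_{n})_{n=1}^\infty$ (we have exhibited one, namely $(G_{n})_{n=1}^\infty$), and finally the supremum over $(F_{n})_{n=1}^\infty\subseteq B_{X^{*}}$ and letting $\varepsilon\to 0$, we conclude $\operatorname{bs}(X^{*})\leqslant 4\mu^{s}(X)$. The only point that requires genuine use of reflexivity is the identification $\alpha=\limsup\|\cdot\|$ together with the availability of weak$^{*}$-convergent subsequences in $B_{X^{*}}$; all the rest is bookkeeping of the $2+2=4$ constant.
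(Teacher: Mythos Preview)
The proposal is correct and follows essentially the same line as the paper's proof: pass to a weak$^{*}$-convergent subsequence using reflexivity, centre at the limit and rescale by $\tfrac{1}{2}$ to stay in the ball, then exploit that in the reflexive case $\alpha$ coincides with $\limsup\|\cdot\|$ so that Lemma~\ref{3.12} converts the $c\alpha$-bound into a $\operatorname{cca}$-bound at the cost of a factor $2$, while a second factor $2$ comes from the rescaling. The only difference is organisational: the paper starts from $c<\operatorname{bs}(X^{*})$ and deduces $\mu^{s}(X)\geqslant c/4$, whereas you fix an arbitrary $(F_{n})\subseteq B_{X^{*}}$ and exhibit a good subsequence directly; the ingredients and constants are identical.
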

\begin{proof}
The former inequality follows from Lemma \ref{2.3}, so we need to prove the latter one.

Let $0<c<\operatorname{bs}(X^{*})$. Then there exists a sequence $(f_{n})_{n=1}^\infty$ in $B_{X^{*}}$ so that
\begin{equation}\label{4.1}
\sup_{(h_{n})_{n=1}^\infty \in \textrm{ss}((g_{n})_{n=1}^\infty)}\textrm{cca}((h_{n})_{n=1}^\infty )>c \quad \big( (g_{n})_{n=1}^\infty\in \textrm{ss}((f_{n})_{n=1}^\infty )\big).
\end{equation}
Due to reflexivity, we may assume that $f_{n}\stackrel{\operatorname{weak}^{*}}{\longrightarrow} f$ for some $f\in B_{X^{*}}$.

Given any $(g_{n})_{n=1}^\infty\in \textrm{ss}((\frac{f_{n}-f}{2})_{n=1}^\infty),$ by \eqref{4.1}, there exists a subsequence $(h_{n})_{n=1}^\infty $ of $(2g_{n}+f)_{n=1}^\infty$ such that $\textrm{cca}((h_{n})_{n=1}^\infty )>c.$ Again, by reflexivity of $X$, we get
$$2c\alpha\big((h_{n}-f)_{n=1}^\infty\big)\geqslant \textrm{cca}((h_{n}-f)_{n=1}^\infty)=\textrm{cca}((g_{n})_{n=1}^\infty )>c.$$
As $(\frac{h_{n}-f}{2})_{n=1}^\infty$ is a subsequence of $(g_{n})_{n=1}^\infty$, $\mu^{s}(X)\geqslant \frac{c}{4}.$ Since $c$ was arbitrary, the proof is complete. \end{proof}

\subsection*{Acknowledgements}
The first-named author would like to thank W.~B. Johnson and B.~Wallis for helpful discussions and comments. The second-named author is indebted to G.~Plebanek for making available \cite{FP} to us.

\end{document}